\newtheorem{dfn}{Definition}
\newtheorem{thm}{Theorem}
\newtheorem{cor}{Corollary}
\newtheorem{rem}{Remark}
\begin{document}

\title{Entrainment and Synchronization in Heterogeneous Networks of Haken-Kelso-Bunz (HKB) Oscillators}

\author[1]{Francesco Alderisio}
\author[2]{Benoit G.~Bardy}
\author[1,3]{Mario di~Bernardo}

\affil[1]{Department of Engineering Mathematics, University of Bristol, Merchant Venturers' Building, BS8 1UB, United Kingdom}
\affil[2]{EuroMov, Montpellier University, 700 Avenue du Pic Saint-Loup, 34090 Montpellier, France.}
\affil[3]{Department of Electrical Engineering and Information Technology, University of Naples Federico II, 80125 Naples, Italy}
\maketitle

\begin{abstract}
In this paper we consider a heterogeneous network of Haken-Kelso-Bunz (HKB) nonlinear oscillators coupled through both linear and nonlinear interaction protocols. While some work exists on a system made up of only two nonlinearly coupled HKB oscillators as a model of human dynamics during interpersonal coordination tasks, the problem of considering a network of three or more HKBs has not been fully investigated. The aim of our work is to study convergence and synchronization in networks of HKB oscillators as a paradigm of coordination in multiplayer games. Convergence results are obtained under the assumption that the network is connected, simple and undirected. Analytical results are obtained to prove convergence when the oscillators are coupled diffusively. All theoretical results are illustrated via numerical examples. Finally, the effects of adding an external entrainment signal to all the agents in the network are analyzed and a model to account for them is proposed.
\end{abstract}

\section{Introduction}
Interpersonal coordination and synchronization between the motion of two individuals have been extensively studied over the past few decades \cite{HKB85,KKSS87, ST94, VMLB11}. 

Synergetic movements of two or more people mirroring each other frequently occur in many activities such as handling objects, manipulating a common workpiece, dancing, choir singing and movement therapy \cite{HT09, VOD10, MLVGSH12, LMH13, RP13}. It is of great importance to reveal not only the effects of mirroring movements among people on human physiological and mental functions, but also to deeply understand the link between intrapersonal and interpersonal coordination.
In social psychology, it has been shown that people prefer to team up with others possessing similar morphological and behavioral features, and that they tend to coordinate their movement unconsciously \cite{F82, LS11}. Moreover, much evidence suggests that motor processes caused by interpersonal coordination are strictly related to mental connectedness. In particular, motor coordination between two human subjects contributes to social attachment particularly when the kinematic features of their movement share similar patterns \cite{WH09,SRdBTA14}. 

In order to explain the experimental observations of human interpersonal coordination, mathematical models are usually derived to capture the key features of the observed behavior.
A classical example is the so-called HKB oscillator which was introduced in \cite{HKB85} to explain the transition from phase to antiphase synchronization in bimanual coordination experiments (for more details see \cite{KSSH87, JFK98}). The HKB nonlinear oscillator was shown to be able to capture many features of human coordination even beyond the bimanual synchronization experiments it was derived to explain. For example, HKB oscillators were used in \cite{ZATdB14_1,ZATdB14_2} in the context of the \emph{mirror game} \cite{NDA11}, often presented as an important paradigmatic case of study to investigate the onset of social motor coordination between two players imitating each other's hand movements.
Furthermore, in \cite{KdGRT09} the authors take inspiration from the \emph{dynamic clamp} of cellular and computational neuroscience in order to probe essential properties of human social coordination by reciprocally coupling human subjects to a computationally implemented model of themselves (HKB oscillator), referred to as Virtual Player (VP). Such concept, namely the \emph{human dynamic clamp} (HDC), was further investigated and developed in \cite{DdGTK14} in order to cover a broader repertoire of human behavior, including rhythmic and discrete movements, adaptation to changes of pacing, and behavioral skill learning as specified by the VP.
Besides, HKB oscillators were also used in \cite{ST94} in order to capture the rhythmic coordination between two individuals swinging hand-held pendulums, in \cite{VMLB11} in order to model spontaneous interpersonal postural coordination between two human people and account for the competition between the coupling to a visual target to track and the coupling to the partner, in \cite{RMIGS07} in order to qualitatively explain interpersonal movement synchronization between two human beings involved in rhythmic paradigms, and in \cite{AST95} in order to account for the frequency detuning of the phase entrainment dynamics of two people involved in interlimb coordination.

While coordination of two human players has been studied in numerous previous investigations, the case of multiple human players has been seldom studied in the existing literature, due to a combination of practical problems in running the experiments and lack of a formal method able not only to model the considered scenario but also to quantify and characterize the synchronization level of the ensemble.
Multiplayer games involve a group of three or more people engaged in a communal coordination task. The variety of scenarios that can be considered is vast due to the countless activities the players might be involved in (limb movements, finger movements, head movements, walking in a crowd, or more in general music and sport activities), the many ways in which the participants can interact and communicate with each other and the different ways all the players can be physically located with respect to each other while performing the specified task.

Some of the existing works on coordination of multiple human players include studies on choir singers during a concert \cite{HT09}, rhythmic activities as for example "the cup game" and marching tasks \cite{IR15}, rocking chairs \cite{FR10, RGFGM12} and coordination of rowers' movements during a race \cite{WW95}.
In these papers the authors provide several experimental results in order to analyze the behavior of a group of people performing some coordinated activities, but a rigorous mathematical model capable of capturing the observed results and explaining the features of the movement coordination among them is still missing. In particular, in \cite{FR10} the authors study both unintentional and intentional coordination by asking the players to try and synchronize the oscillations of the rocking chairs with their eyes shut or open. Synchronization is observed to spontaneously emerge when players observe each other's movements. Another study in which multiplayer activities are analyzed but a mathematical model is missing is carried out in \cite{YY11}: the authors use the symmetric Hopf bifurcation theory, which is a model-independent approach based on coupled oscillators, to investigate the synchronized patterns of three people during sport activities.

Further results about multiplayer activities deal with spontaneous group synchronization of arm movements and respiratory rhythms. For example, in \cite{CBVB14} the authors test whether pre-assigned arm movements performed in a group setting spontaneously synchronize and whether synchronization extends to heart and respiratory rhythms. In their study no explicit directions are given on whether or how the arm swingings are to be synchronized among participants, and experiments are repeated with and without external cues. Interestingly, when an external auditory rhythm is present, both motor and respiratory synchronization is found to be enhanced among the group. Also the overall coordination level is observed to increase when compared to that detected when the same experiments are again carried out in its absence.

While in \cite{CBVB14} no mathematical model is presented, in \cite{GDSC96} the effects of an external visual signal, when considering the postural sway of a human being, is explicitly modeled by introducing a linear oscillator with adaptive parameters depending on the frequency of the exogenous visual signal itself. 
Moreover, when considering the social postural coordination between two human beings \cite{VMLB11}, another model for an external visual stimulus is presented by adding a unidirectional coupling term to the dynamics of a nonlinear oscillator.

The main objective of this paper is to propose and analyze a model able to account for the onset of movement synchronization in multiplayer scenarios and explain some of the features observed experimentally in the existing literature.
Specifically, we consider a heterogeneous network of HKB nonlinear oscillators as a good model of multiplayer coordination and, as already done in \cite{MLVGSH12} for the case of two agents only, we regard it as a synchronization problem.
Each equation is used to model the movement of a different player and is therefore characterized by a different set of parameters to account for human-to-human variability. The effect of different interaction models, linear and nonlinear, will be investigated to understand under what conditions synchronization is observed to emerge.
Our analysis suggests that bounded synchronization is indeed a common emergent property in these networks whose occurrence can also be accounted for analytically in a number of different scenarios. Also, as expected from existing theoretical results, we find that the structure of the interactions among players has an effect on the coordination level detected in the network. 

Furthermore, the effects of adding an external sinusoidal signal will be studied in order to understand whether synchronization can be improved by means of an entrainment signal \cite{RdBS10}. 
Our analysis suggests that the synchronization level of the ensemble can indeed increase when the oscillation frequency of the external signal is similar to the natural angular velocity of the agents in the network. However, in all the other cases, the external signal acts as a disturbance and leads to a decrease of the coordination among the agents. 

We wish to emphasize that the study reported in this paper will form the basis of future experimental investigations which are currently being planned.

The paper is organized as follows.
In Sect. \ref{sec:preliminaries} some notation that shall be used in later sections is introduced.
In Sect. \ref{sec:problemstatement} the equation that describes the network is presented, in terms of both internal dynamics of each agent and coupling protocol thanks to which they can interact with each other.
In Sect. \ref{sec:synchmetr} some metrics are introduced to characterize the quality and the level of coordination in human groups.
In Sect. \ref{sec:testbed} a testbed scenario of multiplayer coordination in networks of human people is presented, while in Sect. \ref{sec:modres} the key synchronization features experimentally observed are reproduced by considering a heterogeneous network of HKB oscillators, and the effects of three different coupling strategies thanks to which they are interconnected are explored.
In Sect. \ref{sec:entrainment} the effects of adding an external entrainment signal is analyzed with respect to the overall synchronization level of the network.
In Sect. \ref{sec:mainresults} global bounded synchronization of the network when its nodes are connected through a linear diffusive coupling protocol is analytically proven to be achieved, and some numerical examples are provided in order to both illustrate the effectiveness of our analysis and to show that bounded synchronization can be achieved also when considering different couplings.
Finally, in Sect. \ref{sec:conclusion} a summary of our results and some possible future developments are presented.

\section{Preliminaries and background}
\label{sec:preliminaries}
We denote with $\otimes$ the Kronecker product between two matrices.
The operator $\lambda_k \left( \cdot \right)$ defined over a matrix indicates the $k$-th eigenvalue of the matrix itself, and $\lambda_M \left( \cdot \right)$ indicates its maximum eigenvalue when the matrix is real and symmetric and as a consequence all the eigenvalue are real as well.

A \emph{graph} is a tuple $\mathcal{G} = \{ \mathcal{V}, \mathcal{A} \}$ defined by a set of nodes $\mathcal{V} = \{ 1,...,N \}$ and a set of edges $\mathcal{A}  \subseteq \mathcal{V} \times \mathcal{V}$. 
A graph is said to be \emph{undirected} if $(i,j) \in \mathcal{A} \iff (j,i) \in \mathcal{A}$. In an undirected graph, two nodes $i$ and $j$ are said to be \emph{neighbors} if $(i,j) \in \mathcal{A}$. 
The matrix $A=\{a_{ij} \} \in \mathbb{R}^{N \times N}$, where

\begin{equation*}
a_{ij}\begin{cases}
>0, & \mbox{if } (i,j) \mbox{ are neighbors} \\ 
=0, & \mbox{otherwise}
\end{cases}
\end{equation*}
is called \emph{adjacency matrix}, and $a_{ij} \ge 0$ is called strength of the interaction between the pair $(i, j)$. In particular, a graph is said to be \emph{unweighted} if the interaction between two neighbors is equal to $1$.
A \emph{path} between nodes $h$ and $k$ is a sequence of nodes, with $h$ and $k$ as endpoints, such that every two consecutive nodes are neighbors. 
A graph is said to be \emph{simple} if $a_{ii} = 0 \ \forall i \in \mathcal{V}$, while it is said to be \emph{connected} if there exists a path between any two of its nodes. 
The matrix $L = \{l_{ij} \} \in \mathbb{R}^{N \times N}$ defined as

\begin{equation}
l_{ij} : =
\begin{cases}
\sum_{k=1}^{N} a_{ik}, & \mbox{if } i=j \\ 
-a_{ij}, & \mbox{if } i \neq j
\end{cases}
\end{equation}
is called \emph{Laplacian matrix} of the graph (or simply \emph{Laplacian}).
The Laplacian of any simple undirected graph is symmetric with zero row sum and is a positive semidefinite matrix with as many null eigenvalues as there are components in the graph. In particular, a connected graph has only one null eigenvalue.

Throughout the paper we shall consider a connected simple undirected network of $N$ agents assuming that any two players interact symmetrically with one another.

Before analyzing a multiplayer scenario, it is worth considering the simpler case of only two human players interacting with each other. The system that can be used to model the interaction between them is described as follows \cite{RMIGS07, FJ08}:

\begin{equation}
\begin{cases}
\ddot{x_1}+\left(\alpha x_1^2 + \beta \dot{x}_1^2 -\gamma \right) \dot{x}_1 + \omega_1^2 x_1 = I(x_1,x_2) \\
\ddot{x_2}+\left(\alpha x_2^2 + \beta \dot{x}_2^2 -\gamma \right) \dot{x}_2 + \omega_2^2 x_2 = I(x_2,x_1) \end{cases}
\end{equation}
where $x_i \in \mathbb{R}$ denotes the position of the $i$-th player, with $i=1,2$. The right-hand side of both equations represents the coupling term between the two players: in particular

\begin{equation}
I(w,z) := [ a+b\left( w-z \right)^2 ] \left( \dot{w}-\dot{z} \right)
\end{equation}

The term $\left(\alpha x_i^2 + \beta \dot{x}_i^2 -\gamma \right) \dot{x}_i$ represents the nonlinear damping of the oscillatory movement of player $i$. Specifically, the sign of $\gamma$ determines whether, in the absence of coupling, the oscillation is persistent ($\gamma>0$) or vanishes (vice versa) as time goes by: it is trivial to verify this by studying the stability of the origin and checking the sign of the eigenvalues of the Jacobian of the system. Moreover, $\alpha$ and $\beta$ determine the amplitude of such oscillation, while $\omega_i$ is related to its frequency. It has been proven that this model of two nonlinearly coupled oscillators account for the features observed during experimental data in bimanual experiments (see \cite{HKB85} for further details).

\section{Human to human coordination as a synchronization problem}
\label{sec:problemstatement}
In the introduction of this paper we have pointed out that the dynamics of two coupled HKB oscillators has been used to describe different kinds of interpersonal coordination tasks between two people, including bimanual coordination experiments, mirror game, social postural coordination and rocking chairs. According to the particular scenario considered, the state vector of each oscillator is used to represent position and velocity of the particular body part of interest of either of the players (finger, hand, head, and so forth). Following the same approach, we can consider a scenario in which more than two human beings are performing a multiplayer coordination task, as for example arm or hand rhythmic movements, rocking chairs, head tracking of a visual target and so on. In these cases, the state vector of each node represents position and velocity of the particular body part of interest of each player.
Therefore, the dynamics of each player when moving in isolation will be described by an HKB equation:

\begin{equation}
\label{eqn:hkbInternalDynamics}
f_i(t,x_i)=
\begin{bmatrix}
x_{i_2} \\
- (\alpha_i x_{i_1}^2+\beta_i x_{i_2}^2-\gamma_i)x_{i_2} - \omega_i^2 x_{i_1}
\end{bmatrix}
\end{equation}
where $x_i=[x_{i_1} \ x_{i_2}]^T \in \mathbb{R}^2$ is the state vector, with $x_{i_1}, x_{i_2}$ representing position and velocity of the $i$-th human player, respectively.

To model the interaction between different players we assume that the dynamics of each of them is affected by some coupling function $u_i$ which depends on the state of its neighbors.
In what follows we will explore the effects of three possible selections for such a function.
In particular, we are interested in analyzing the differences of the results provided by all of them and understanding which one leads to synchronization features which are the closest to the ones observed in some existing work about group synchronization of networks of several human people involved in a coordination task \cite{RGFGM12}.

\begin{enumerate}
\item \emph{Full state coupling}. With this kind of coupling, we assume that players adjust both their velocities and accelerations proportionally to the average mismatch between theirs and those of their neighbors. Mathematically, we have:

\begin{equation}
\label{eqn:gsldc}
u_i = -\frac{c}{\mathcal{N}_i} \sum_{j=1}^{N} a_{ij} \left( x_i-x_j \right)
\end{equation}


In particular, $\mathcal{N}_i>0$ is the number of neighbors of node $i$, while $c>0$ is the coupling strength among the agents.

\item \emph{Partial state coupling}. Next, we explore the case where players only adjust their accelerations according to the position and velocity mismatches from their neighbors:

\begin{equation}
\label{eqn:gsipvc}
u_i = - \begin{bmatrix}
0 \\
\sum_{j=1}^{N} \frac{a_{ij}}{\mathcal{N}_i} \left[ c_1 \left( x_{i_1}-x_{j_1} \right) + c_2 \left( x_{i_2}-x_{j_2} \right) \right]
\end{bmatrix}
\end{equation}


In particular, $\mathcal{N}_i>0$ is the number of neighbors of node $i$, while $c_1,c_2>0$ represent the position and the velocity coupling strengths, respectively.

\item \emph{HKB coupling}. Finally we consider an interaction model which is the direct extension to multiplayer coordination problems of the interaction function used in the classical HKB set up to model coordination between two players \cite{HKB85,FJHK96}. Specifically we choose the following nonlinear function:

\begin{equation}
\label{eqn:gshkbc}
u_i = \begin{bmatrix}
0 \\
\frac{c}{\mathcal{N}_i} \sum_{j=1}^{N} a_{ij} [a+b(x_{i_1}-x_{j_1})^2](x_{i_2}-x_{j_2})
\end{bmatrix}
\end{equation}


Once again, $\mathcal{N}_i>0$ is the number of neighbors of node $i$, while $c>0$ represents the coupling strength among the agents. 
\end{enumerate}

The resulting network model describing the interaction of a group of $N$ players can then be written as

\begin{equation}
\label{eqn:networkeq}
\dot{x}_i(t) = \begin{bmatrix}
x_{i_2} \\
- (\alpha_i x_{i_1}^2+\beta_i x_{i_2}^2-\gamma_i)x_{i_2} - \omega_i^2 x_{i_1}
\end{bmatrix} + u_i(t) \ \in \mathbb{R}^2
\end{equation}
where the coupling function $u_i$ can be chosen as one of those listed above.
We now explore under what conditions coordination, and hence synchronization, emerges for each of the three scenarios of interest. 

We wish to emphasize that, since the node parameters are heterogeneous, complete synchronization as defined in \cite{LDCH10} cannot be achieved. We will consider instead the case where bounded synchronization, as defined below, emerges.
Namely, we define the average trajectory as

\begin{equation}
\bar{x}(t) : = \frac{1}{N} \sum_{j=1}^{N} x_j(t)
\end{equation}
and the tracking error as

\begin{equation}
e_i(t) : = x_i(t)-\bar{x}(t) \quad \forall t \ge 0, i=1,...,N
\end{equation}

We also define the parameters vector for each node $i$ as $\vartheta_i := [\alpha_i \ \beta_i \ \gamma_i \ \omega_i]^T \in \mathbb{R}^4$, and we introduce the stack vectors $x(t) : = [x_1(t)^T \ x_2(t)^T \ ... \ x_N(t)^T]^T \in \mathbb{R}^{2N}$ and $e(t) : = [e_1(t)^T \ e_2(t)^T \ ... \ e_N(t)^T]^T \in \mathbb{R}^{2N}$ and the error norm $\eta(t) : = ||e(t)|| \in \mathbb{R}, \forall t \ge 0$, where $|| \cdot ||$ indicates the Euclidean norm.

We say that a network of HKB oscillators achieves coordination if and only if 

\begin{equation}
\label{eqn:gbseqdef}
\lim_{t \to \infty} \eta(t) \le \epsilon
\end{equation}
for any initial condition $x_{i,0}$ and parameter vector $\vartheta_i$ of the nodes in the network, where $\epsilon>0$ is a sufficiently small constant.

\section{Synchronization metrics}
\label{sec:synchmetr}
In order to quantify and analyze the synchronization level in a network of more than two agents, we use the metrics introduced in \cite{RGFGM12} to characterize the quality and the level of coordination in human groups. 

Let $x_k(t) \in \mathbb{R} \  \forall t \in [0,T]$ be the continuous time series representing the motion of each agent, with $k \in [1,N]$, where $N$ is the number of individuals and $T$ is the duration of the experiment. Let $x_k(t_i) \in \mathbb{R}$, with $k \in [1,N]$ and $ i \in [1,N_T]$, be the respective discrete time series of the $k$-th agent, obtained after sampling $x_k(t)$, where $N_T$ is the number of time steps and $\Delta T := \frac{T}{N_T}$ is the sampling period. Let $\theta_k(t) \in [-\pi,\pi]$ be the phase of the $k$-th agent, which can be estimate by making use of the Hilbert transform of the signal $x_k(t)$ \cite{KCRPM08}. We define the \emph{cluster phase} or \emph{Kuramoto order parameter}, both in its complex form $q'(t) \in \mathbb{C}$ and in its real form $q(t) \in [-\pi,\pi]$ as

\begin{equation}
q'(t) :=  \frac{1}{N} \sum_{k=1}^{N} e^{ \{ j \theta_k(t) \} }
\end{equation}

\begin{equation}
q(t) := {\rm atan2} \left(\Im(q'(t)),\Re(q'(t))  \right)
\end{equation}
which can be regarded as the average phase of the group at time $t$.

Let $\phi_k(t) := \theta_k(t) - q(t)$ be the relative phase between the $k$-th participant and the group phase at time $t$. We can define the relative phase between the $k$-th participant and the group averaged over the time interval $[t_1,t_{N_T}]$, both in its complex form $\bar{\phi}'_k \in \mathbb{C}$ and in its real form $\bar{\phi}_k \in [-\pi,\pi]$ as

\begin{equation}
\bar{\phi}'_k := \frac{1}{T} \int_{0}^{T} e^{ \{ j \phi_k(t) \} } \ dt \simeq \frac{1}{N_T} \sum_{i=1}^{N_T} e^{ \{ j \phi_k(t_i) \} }
\end{equation}

\begin{equation}
\qquad  \bar{\phi}_k := {\rm atan2} \left( \Im(\bar{\phi}'_k), \Re(\bar{\phi}'_k) \right)
\end{equation}

In order to quantify the degree of synchronization for the $k$-th agent within the group we define the following parameter

\begin{equation}
\label{eqn:r1}
\rho_k := |\bar{\phi}'_k| \quad \in [0,1]
\end{equation}
which simply gives information on how much the $k$-th agent is synchronized with the average trend of the group. The closer $\rho_k$ is to $1$, the better the synchronization of the $k$-th agent itself.
 
In order to quantify the synchronization level of the entire group at time $t$ we define the following parter

\begin{equation}
\label{eqn:r2}
\rho_{g}(t) := \frac{1}{N} \left | \sum_{k=1}^{N} e^{ \{ j [ \phi_k(t)- \bar{\phi}_k ] \} } \right | \quad \in [0,1]
\end{equation}
which simply represents the group synchronization: the closer $\rho_{g}(t)$ is to $1$, the better the synchronization level of the group at time $t$. 
Its value can be averaged over the whole time interval $[0,T]$ in order to have an estimate of the mean synchronization level of the group during the total duration of the performance:

\begin{equation}
\label{eqn:r3}
\rho_g := \frac{1}{T} \int_{0}^{T} \rho_{g}(t) \ dt \simeq \frac{1}{N_T} \sum_{i=1}^{N_T} \rho_{g}(t_i) \quad \in [0,1]
\end{equation}

Finally if we denote with $\phi_{d_{k,k'}}(t):=\theta_k(t)-\theta_{k'}(t)$ the relative phase between two participants in the group at time $t$, it is possible to estimate their dyadic synchronization, that is the synchronization level between participants $k$ and $k'$ over the whole round:

\begin{equation}
\label{eqn:r4}
\rho_{d_{k,k'}} := \left | \frac{1}{T} \int_{0}^{T} e^{ \{ j \phi_{d_{k,k'}}(t) \} } \ dt \right |
\simeq \left | \frac{1}{N_T} \sum_{i=1}^{N_T} e^{ \{ j \phi_{d_{k,k'}}(t_i) \} } \right | \quad \in [0,1]
\end{equation}

It is worth pointing out that high dyadic synchronization levels can coexist with low group synchronization values.

\section{Testbed example}
\label{sec:testbed}
As a testbed scenario we consider the synchronization of rocking chairs motion studied in \cite{RGFGM12}. In particular, participants sit on six identical wooden rocking chairs disposed as a circle and are supposed to rock them in two different conditions:
\begin{enumerate}
\item \emph{Eyes closed:} participants are required to rock at their own preferred frequency while keeping their eyes closed;
\item \emph{Eyes open:} participants are required to rock at their own preferred frequency while trying to synchronize their rocking chair movements as a group.
\end{enumerate}

In the eyes closed condition the participants are not visually coupled, meaning that the oscillation frequency of each of them is not influenced by the motion of the others, whilst in the eyes open condition each player is asked to look at the middle of the circle in order to try and synchronize their motion with the one of the others. The six participants first perform a trial while keeping their eyes closed, then perform two eyes open trials, namely $T1$ and $T2$; each of the three trials lasts $3$ minutes. 

In Fig. \ref{fig:rockChSim} some results about the typical trend of the group synchronization $\rho_g(t)$ and its mean value and standard deviation are represented for each of the three aforementioned trials. In particular, in Fig. \ref{fig:rockChSim}a we can observe that the mean value $\rho_g$ of the group synchronization, represented as a circle, is around $0.4$ in the eyes closed condition, while it is around $0.85$ in the eyes open condition: this means that, when the participants are not visually coupled, as expected synchronization does not emerge, whilst when visually coupled and explicitly told to rock their own chair movements as a group, the synchronization level significantly increases. In Fig. \ref{fig:rockChSim}b we can observe that in the eyes closed condition the amplitude of the oscillations of the group synchronization is higher than the one obtained in the eyes open condition.

 \begin{figure}
 \centering
 \subfloat[mean value and standard deviation]{\includegraphics[width=.5\textwidth]{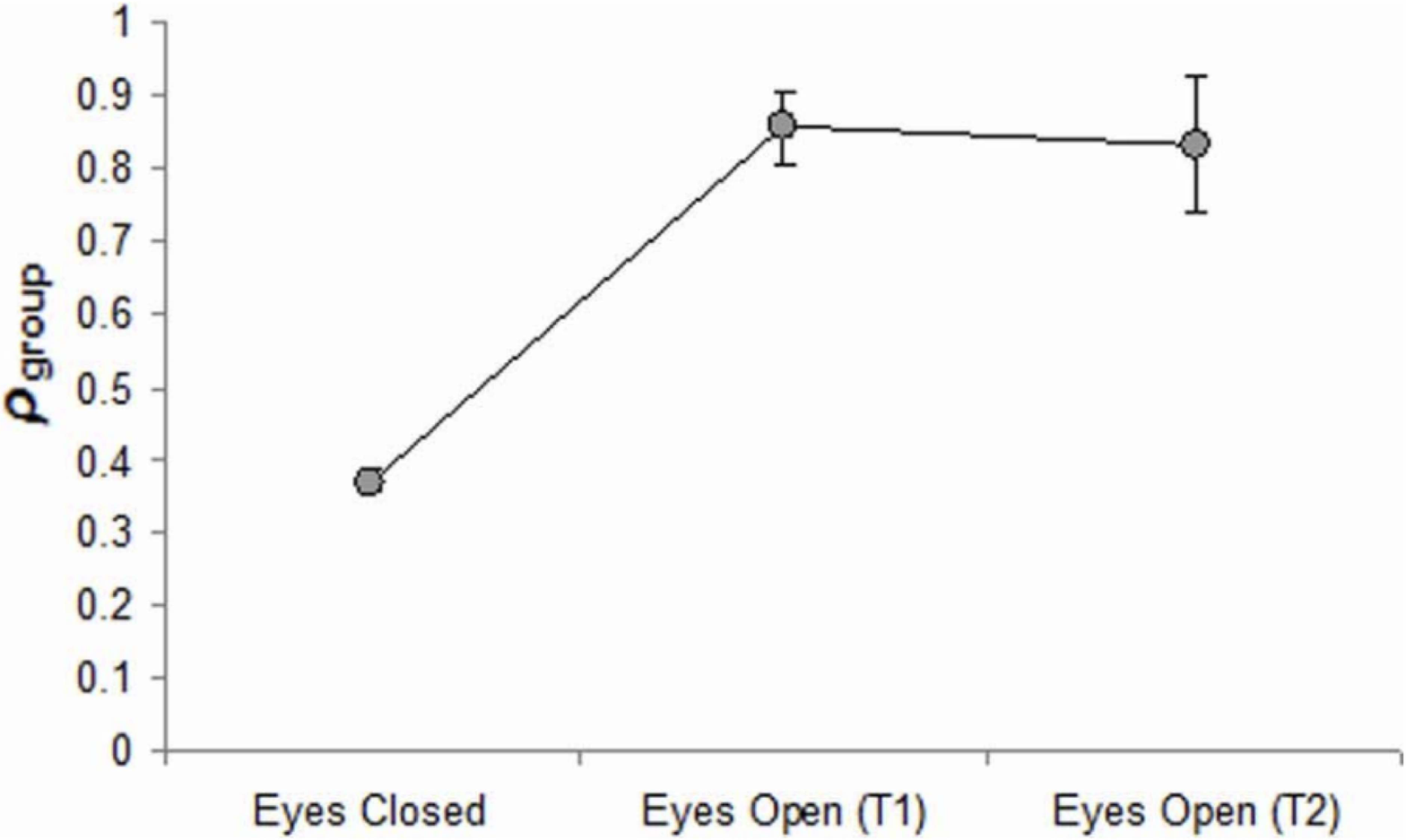}}
 \subfloat[typical trend]{\includegraphics[width=.5\textwidth]{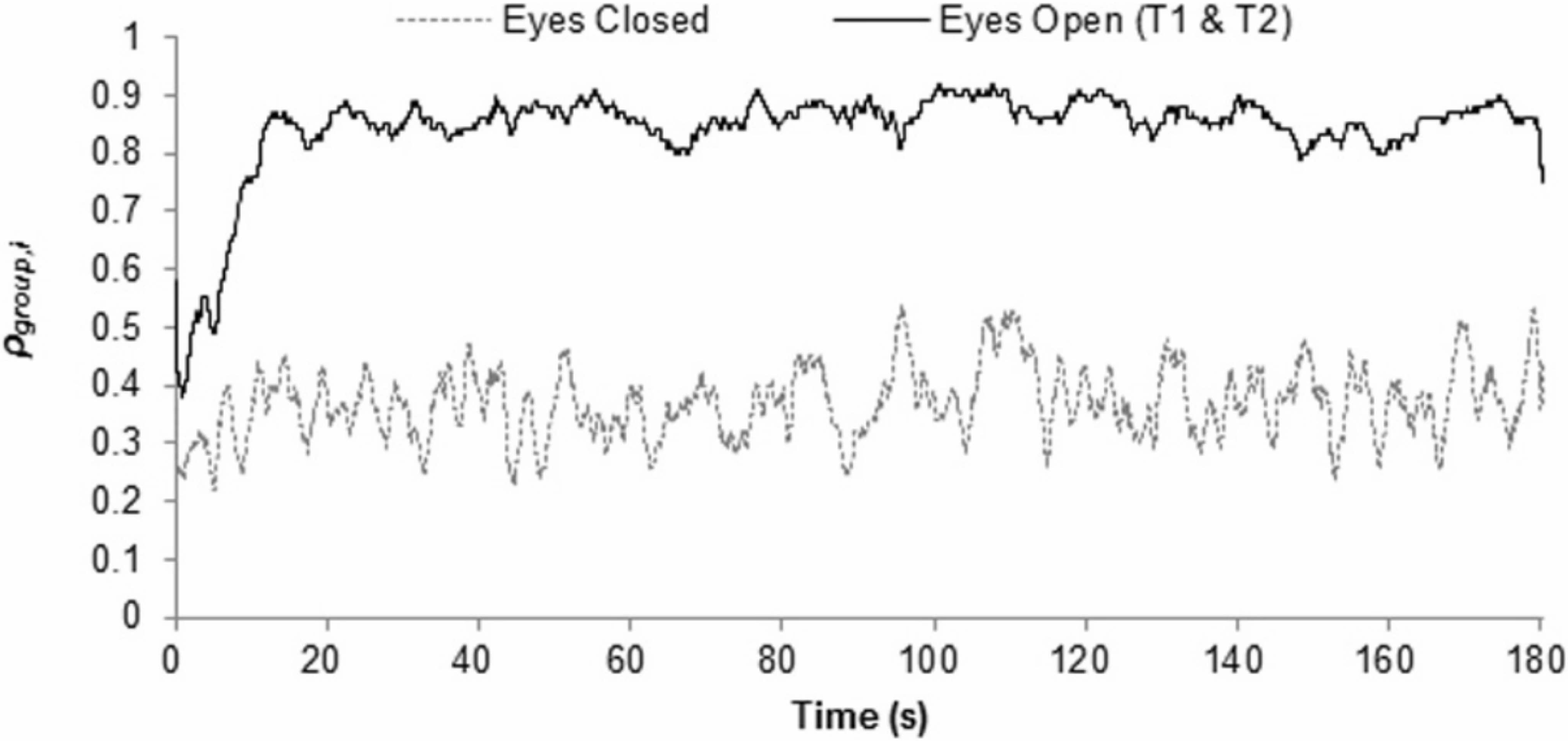}}
 \caption{Group synchronization in the rocking chairs experiments of \cite{RGFGM12} - T1 and T2 refer to two different trials of the eyes open condition}
 \label{fig:rockChSim}
\end{figure}

In Table \ref{table:rhoKcompRC} we show typical values of the degree of synchronization $\rho_k$ of the participants involved in the rocking chairs experiments, both for the eyes closed and the eyes open condition. It is easy to see that, as expected, the value of $\rho_k$ is much higher for almost all the participants when they are visually coupled. Interestingly enough, agent $6$ does not undergo an improvement of $\rho_6$ with respect to the eyes closed condition, meaning that such participant has more trouble synchronizing with the group compared to the other ones.

\begin{table}[ht]
\caption{Degree of synchronization of the participants in the rocking chairs experiments of \cite{RGFGM12} - EC: eyes closed, EO: eyes open}
\centering
\label{table:rhoKcompRC}
\begin{tabular}{lll}
\hline\noalign{\smallskip}
Participant & EC ($\rho_g=0.36$) & EO ($\rho_g=0.80$) \\
\noalign{\smallskip}\hline\noalign{\smallskip}
1 & $0.36$ & $0.95$ \\
2 & $0.34$ & $0.92$ \\
3 & $0.30$ & $0.95$ \\
4 & $0.35$ & $0.88$ \\
5 & $0.34$ & $0.67$ \\
6 & $0.40$ & $0.37$ \\
\noalign{\smallskip}\hline
\end{tabular}
\end{table}

In Table \ref{table:dyadSynchRC} we show typical values of the dyadic synchronization $\rho_{d_{k,k'}}$ of the participants involved in the rocking chairs experiments for the eyes open condition. As expected, the lowest values are the ones obtained with respect to the participant that most struggled to synchronize with the group, that is participant $6$.

\begin{table}[ht]
\caption{Dyadic synchronization of the participants in the rocking chairs experiments of \cite{RGFGM12} for the eyes open condition ($\rho_g=0.80$)}
\centering
\label{table:dyadSynchRC}
\begin{tabular}{llllll}
\hline\noalign{\smallskip}
Participants & $2$ & $3$  & $4$ & $5$ & $6$ \\ 
\noalign{\smallskip}\hline\noalign{\smallskip}
1 & $0.87$ & $0.86$  & $0.81$ & $0.63$ & $0.19$  \\
2 & $-$ & $0.85$ & $0.78$ & $0.59$ & $0.21$ \\
3 & $-$ & $-$  & $0.82$ & $0.61$ & $0.21$  \\
4 & $-$ & $-$  & $-$ & $0.50$ & $0.18$ \\
5 & $-$ & $-$  & $-$ & $-$ & $0.14$ \\
\noalign{\smallskip}\hline
\end{tabular}
\end{table} 

\subsection{Modeling results}
\label{sec:modres}
In this section we uncover the synchronization features that the three different interaction protocols introduced earlier lead to, with respect to the rocking chairs experiments introduced earlier as a testbed scenario \cite{RGFGM12}.
We will explore whether and how the model of coupled HKB oscillators we propose in this paper can reproduce the key features of the observed experimental results. In so doing we will explore:
\begin{itemize}
\item the effects of choosing different coupling functions;
\item how varying the coupling strength affects the synchronization level of the agents.
\end{itemize}

In what follows we simulate a heterogeneous network of $N=6$ HKB oscillators whose parameters and initial values are heuristically set as described in Table \ref{table:6nodesTable} and we set $T=200s$. We suppose that the network is simple, connected, unweighted and undirected and we assume that each node is connected to all the others (complete graph), which we believe well represents the topology implemented in the rocking chairs experiments of \cite{RGFGM12} for the eyes open condition.

\begin{table}[ht]
\caption{Numerical simulations - parameters and initial values for a network of $N=6$ HKB oscillators}
\centering
\label{table:6nodesTable}
\begin{tabular}{llllll}
\hline\noalign{\smallskip}
Nodes & $\alpha_i$ & $\beta_i$  & $\gamma_i$ & $\omega_i$ & $x_i(0)$ \\
\noalign{\smallskip}\hline\noalign{\smallskip}
1 & $0.46$ & $1.16$  & $0.58$ & $0.31$ & $[-1.4, +0.3]$  \\
2 & $0.37$ & $1.20$  & $1.84$ & $0.52$ & $[+1.0, +0.2]$ \\
3 & $0.34$ & $1.73$  & $0.62$ & $0.37$ & $[-1.8, -0.3]$  \\
4 & $0.17$ & $0.31$  & $1.86$ & $0.41$ & $[+0.2, -0.2]$ \\
5 & $0.76$ & $0.76$  & $1.40$ & $0.85$ & $[+1.5, +0.1]$ \\
6 & $0.25$ & $0.86$  & $0.56$ & $0.62$ & $[-0.8, -0.1]$ \\
\noalign{\smallskip}\hline
\end{tabular}
\end{table} 

\begin{figure}
 \centering
 \subfloat[mean value and standard deviation]{\includegraphics[width=.5\textwidth]{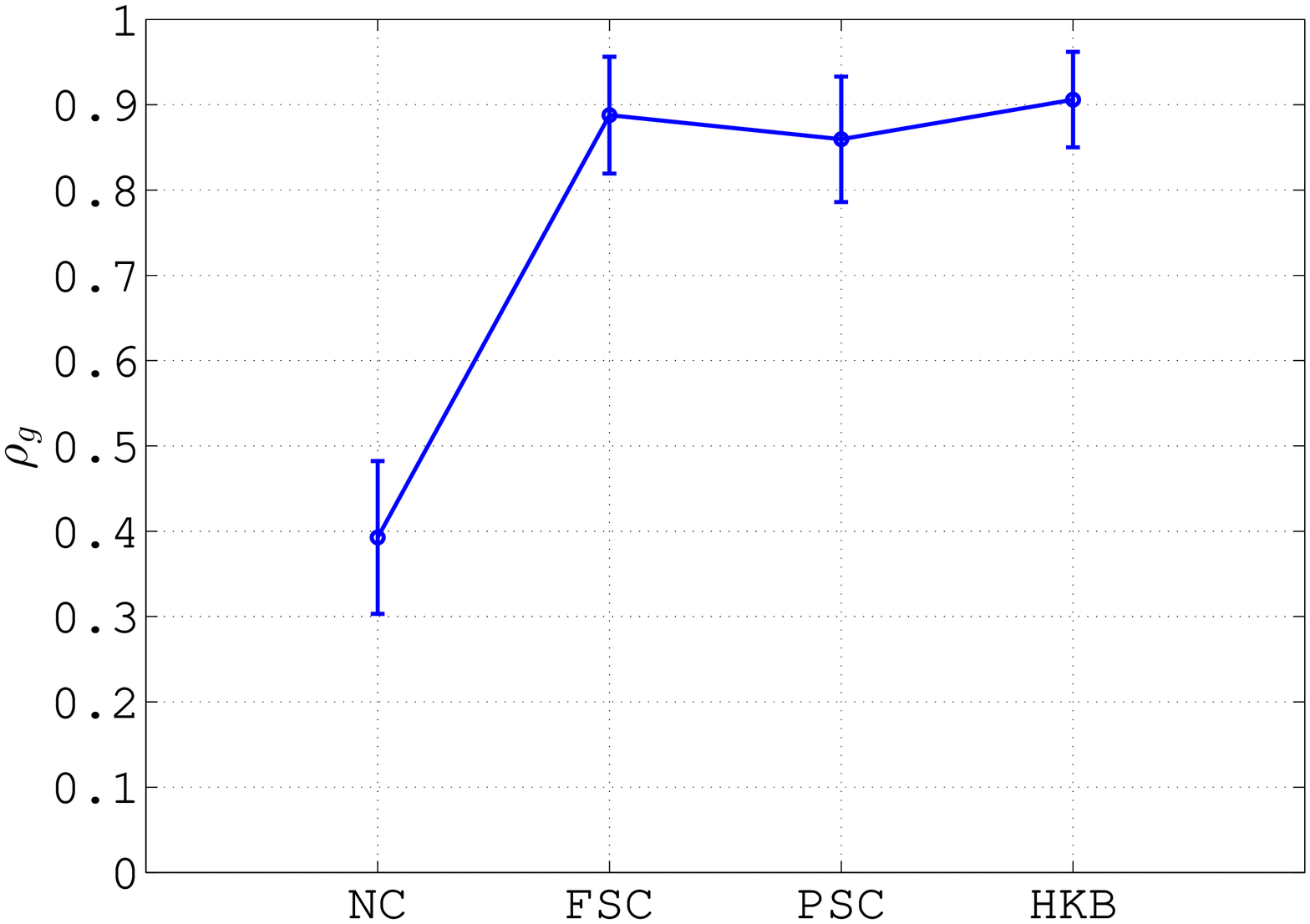}}
 \subfloat[typical trend]{\includegraphics[width=.5\textwidth]{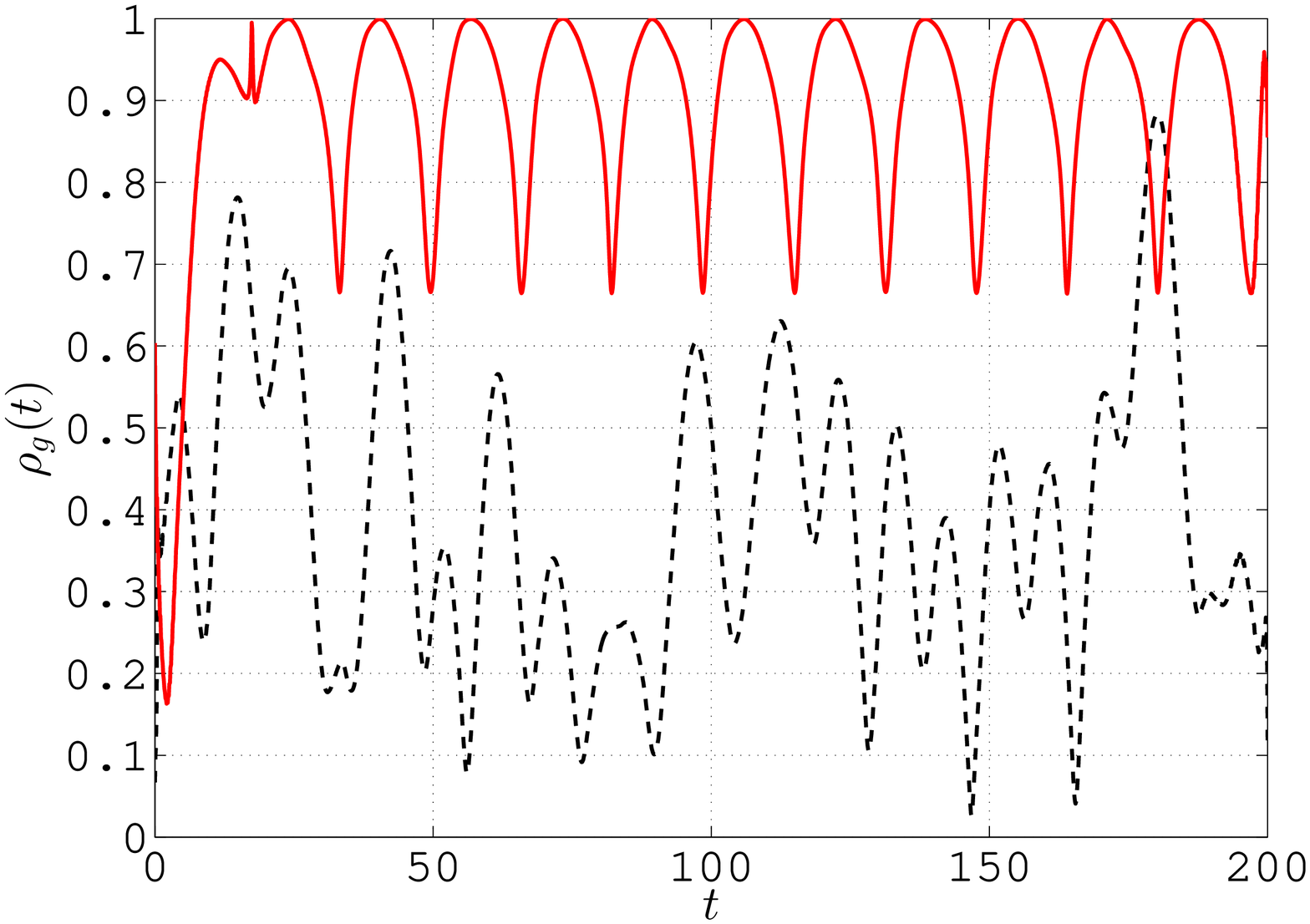}}
 \caption{Group synchronization in a heterogeneous unweighted complete graph of $N=6$ HKB oscillators - \textbf{a} \emph{NC}: no coupling, \emph{FSC}: full state coupling ($c=0.15$), \emph{PSC}: partial state coupling ($c_1=c_2=0.15$), \emph{HKB}: HKB coupling ($a=b=-1,c=0.15$) - \textbf{b} black dashed line: no coupling, red solid line: full state coupling}
 \label{fig:syncLev6}
\end{figure}

In particular, since we are interested in replicating the key features of the rocking chairs experiments for both the conditions (eyes open and eyes closed), in Fig. \ref{fig:syncLev6} we show the group synchronization obtained with and without interaction protocol. In particular, in Fig. \ref{fig:syncLev6}a we show mean value and standard deviation of $\rho_g(t)$: in each column, they are shown both for all the three interaction protocols presented earlier and for the case in which the nodes are not connected, respectively. The mean value is indicated with a circle, while the standard deviation from it is indicated with a vertical bar whose extremities are delimited by two horizontal lines.
In particular, if we denote with $\rho_g$ the mean value of the the group synchronization $\rho_g(t)$ obtained as defined in Eq. \ref{eqn:r3} for each of the four aforementioned cases after a simulation of duration $T$ and with $\sigma_{\rho_g}$ its standard deviation, respectively, we have that:

\begin{equation}
\sigma_{\rho_g} = \sqrt{ \frac{1}{T} \int_{0}^{T} \left( \rho_g(t)-\rho_g \right)^2 \ dt }
\simeq \sqrt{ \frac{1}{N_T} \sum_{i=1}^{N_T} \left( \rho_g(t_i)-\rho_g \right)^2 }
\end{equation}

It is easy to observe that, in absence of connections among the nodes, which corresponds to $u_i=0 \ \forall i \in [1,N]$, the group synchronization has a mean value approximately equal to $0.4$, while it significantly increases (approximately $0.9$) when connecting the nodes with any of the three interaction protocols introduced above. 
These results confirm the observations previously made for a network of six human people involved in rocking chairs experiments (see Fig. \ref{fig:rockChSim}a).
In particular, we have chosen $c=0.15$ for the full state interaction protocol, $c_1=c_2=0.15$ for the partial state interaction protocol, and $a=b=-1, c=0.15$ for the HKB interaction protocol. 

In Fig. \ref{fig:syncLev6}b we show the time evolution of the group synchronization $\rho_g(t)$ when the nodes are not connected at all (black dashed line) and when they are connected through a full state interaction protocol (red solid line): for the sake of clarity we do not show the trend of $\rho_g(t)$ obtained with a partial state and an HKB interaction protocol since they are qualitatively analogous to the one obtained with a full state interaction protocol. 
Our simulation results are able to reproduce another key feature observed in \cite{RGFGM12}: when the nodes are uncoupled, which corresponds to the eyes closed condition, the amplitude of the oscillations of $\rho_g(t)$ is higher than the one obtained when the nodes are coupled instead, which corresponds to the eyes open condition (see Fig. \ref{fig:rockChSim}b).

Then, in Table \ref{table:rhoKcomp} we show the degree of synchronization $\rho_k$ obtained for each node of the network, both in absence of coupling among the agents and in its presence.
It is easy to see that for each node $k$ in the network, $\rho_k$ has much higher values when any of the three interaction protocols is introduced, confirming what observed in \cite{RGFGM12} when asking the participants to rock their chairs while keeping their eyes open rather than closed. Moreover, we are able to reproduce another interesting feature: despite the group synchronization assuming high values when the human players are visually coupled, there might be some agents that struggle to keep up with the general trend of the group, therefore showing lower values in terms of $\rho_k$ (node $5$ in our simulations).

\begin{table}[ht]
\caption{Degree of synchronization of the nodes in a heterogeneous unweighted complete graph of $N=6$ HKB oscillators - no coupling (NC), full state coupling (FSC) with $c=0.15$, partial state coupling (PSC) with $c_1=c_2=0.15$ and HKB coupling (HKB) with $a=b=-1,c=0.15$}
\centering
\label{table:rhoKcomp}
\begin{tabular}{lllll}
\hline\noalign{\smallskip}
Node & NC & FSC & PSC & HKB \\
\noalign{\smallskip}\hline\noalign{\smallskip}
1 & $0.42$ & $0.95$ & $0.93$ & $0.97$ \\
2 & $0.38$ & $0.92$ & $0.94$ & $0.96$ \\
3 & $0.45$ & $0.98$ & $0.95$ & $0.97$ \\
4 & $0.41$ & $0.98$ & $0.96$ & $0.98$ \\
5 & $0.33$ & $0.33$ & $0.33$ & $0.49$ \\
6 & $0.36$ & $0.98$ & $0.97$ & $0.98$ \\
\noalign{\smallskip}\hline
\end{tabular}
\end{table} 

Furthermore, in Table \ref{table:dyadSynch} we show the dyadic synchronization $\rho_{d_{k,k'}}$ for all the possible couples of nodes in the network: again, our simulation results confirm what observed for the rocking chairs experiments. Indeed, the couples of nodes with lower dyadic synchronization correspond to pairs in which at least either of the two nodes had trouble synchronizing with the general trend of the group (node $5$ in our simulations). For the sake of clarity we show $\rho_{d_{k,k'}}$ only when connecting the nodes through a full state interaction protocol, since analogous results can be obtained also for the other two strategies introduced earlier in this paper.

\begin{table}[ht]
\caption{Dyadic synchronization in a heterogeneous unweighted complete graph of $N=6$ HKB oscillators - full state coupling ($c=0.15$) }
\centering
\label{table:dyadSynch}
\begin{tabular}{llllll}
\hline\noalign{\smallskip}
Nodes & $2$ & $3$  & $4$ & $5$ & $6$ \\
\noalign{\smallskip}\hline\noalign{\smallskip}
1 & $0.91$ & $0.98$  & $0.94$ & $0.38$ & $0.98$  \\
2 & $-$ & $0.92$ & $0.96$ & $0.43$ & $0.93$ \\
3 & $-$ & $-$  & $0.97$ & $0.39$ & $0.99$  \\
4 & $-$ & $-$  & $-$ & $0.40$ & $0.98$ \\
5 & $-$ & $-$  & $-$ & $-$ & $0.41$ \\
\noalign{\smallskip}\hline
\end{tabular}
\end{table} 

It is easy to foresee that, regardless of the interaction protocol the nodes are connected through, the value of the coupling strength has a direct impact on the group synchronization in terms of its mean value and its standard deviation. We now show how quantitatively $\rho_g$ varies as the the coupling strength varies for all the three interaction protocols introduced earlier in this paper, when considering a heterogeneous unweighted complete graph of $N=6$ HKB oscillators whose parameters and initial values are described in Table \ref{table:6nodesTable}. Moreover, we set $T=200s$.

From Fig. \ref{fig.rhoG_coupStr_FSC} to \ref{fig:rhoG_coupStr_HKB} we show mean value and standard deviation of the group synchronization $\rho_g(t)$ obtained for different values of the coupling strength when considering full state coupling, partial state coupling and HKB coupling as interaction protocols, respectively. In particular, the blue solid line refers to the mean value of $\rho_g(t)$, while the red dashed lines indicate the variation or dispersion from it.

\begin{figure}
 \centering
 \subfloat[group synchronization]{\includegraphics[width=.5\textwidth]{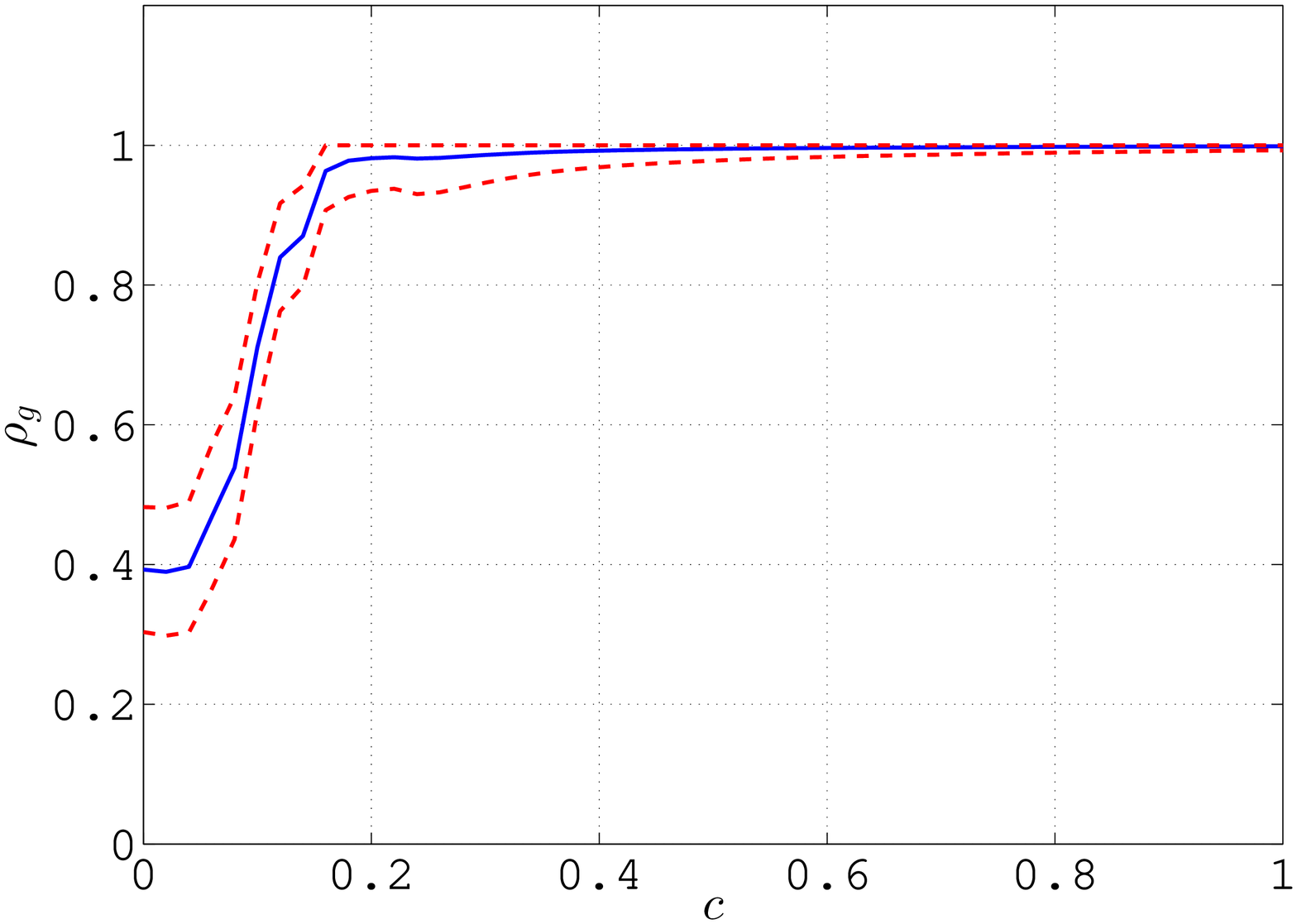}}
 \subfloat[group synchronization - zoom]{\includegraphics[width=.5\textwidth]{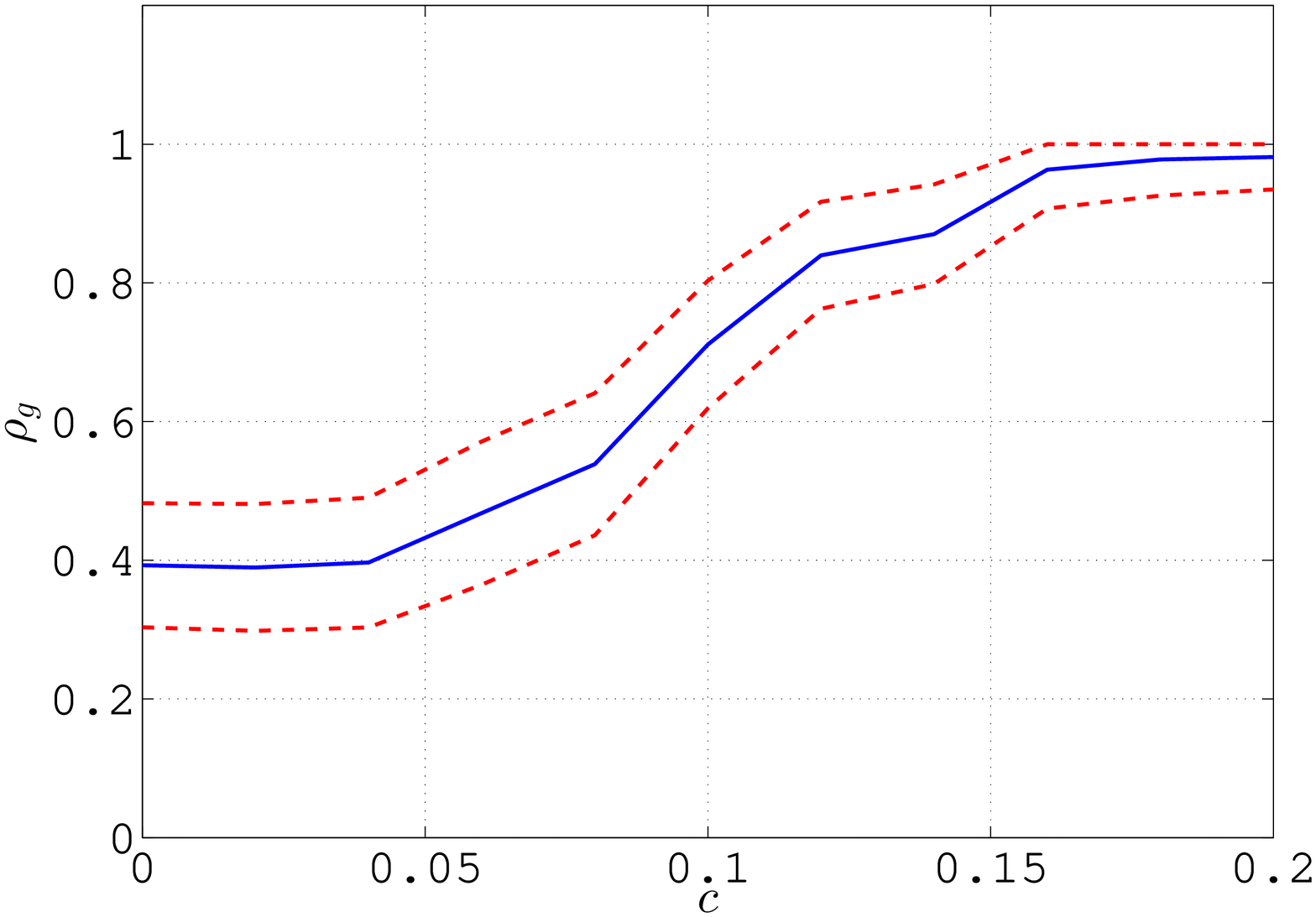}}
 \caption{Mean value and standard deviation of the group synchronization in a heterogeneous unweighted complete graph of $N=6$ HKB oscillators for different values of the coupling strength $c$ - full state coupling}
 \label{fig.rhoG_coupStr_FSC}
\end{figure}

From Fig. \ref{fig.rhoG_coupStr_FSC}a it is clear that, when considering a full state coupling as interaction protocol, the group synchronization increases as the coupling strength $c$ increases: in particular, in order for the network to well synchronize it is sufficient a relatively small value for the coupling strength ($c \simeq 0.15$, see Fig. \ref{fig.rhoG_coupStr_FSC}b). In terms of multiplayer games, this means that the stronger the influence that each player has on the others, the better the overall synchronization of the human participants.

\begin{figure}
 \centering
 \subfloat[$c_2=0, c_1$ variable]{\includegraphics[width=.5\textwidth]{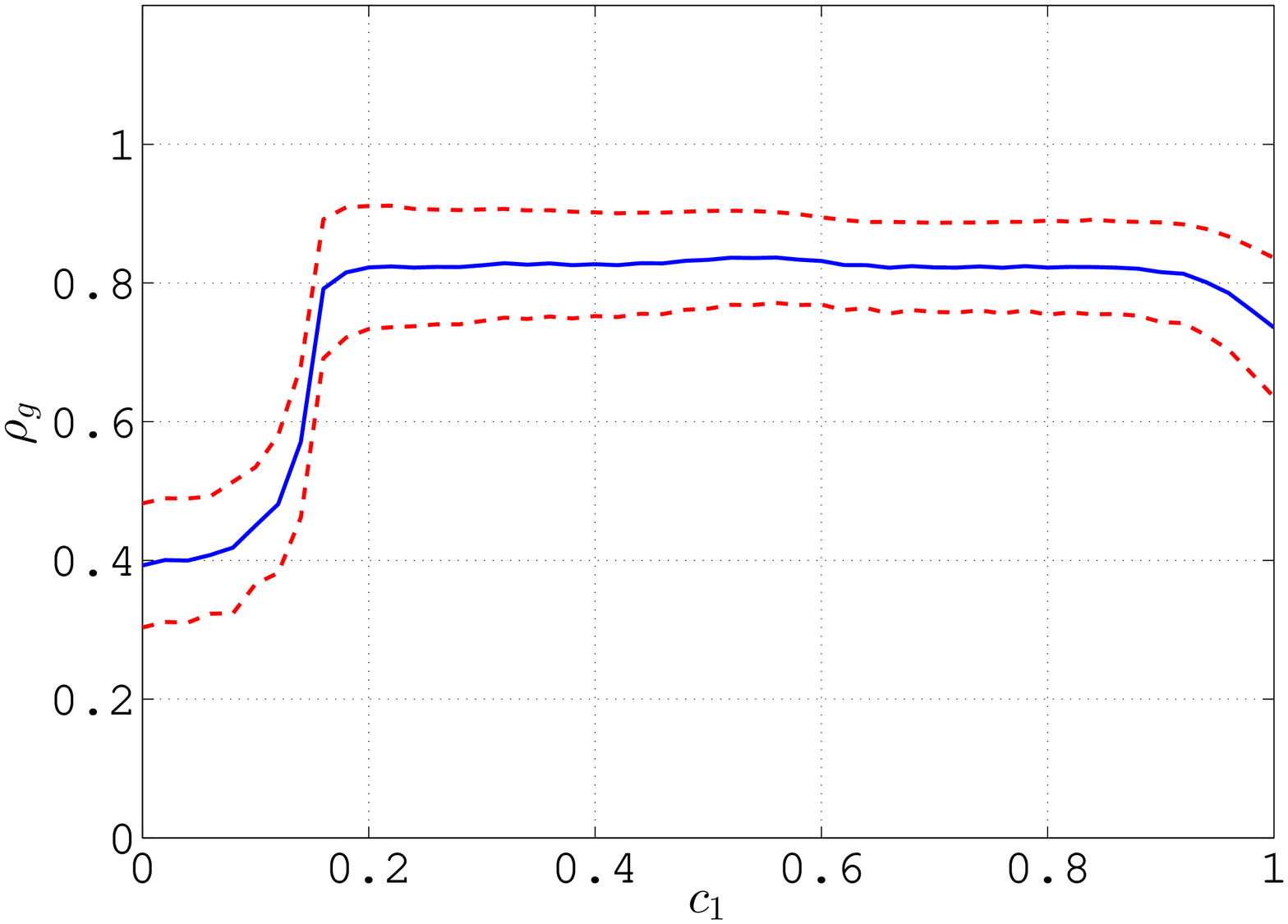}}
 \subfloat[$c_1=0, c_2$ variable]{\includegraphics[width=.5\textwidth]{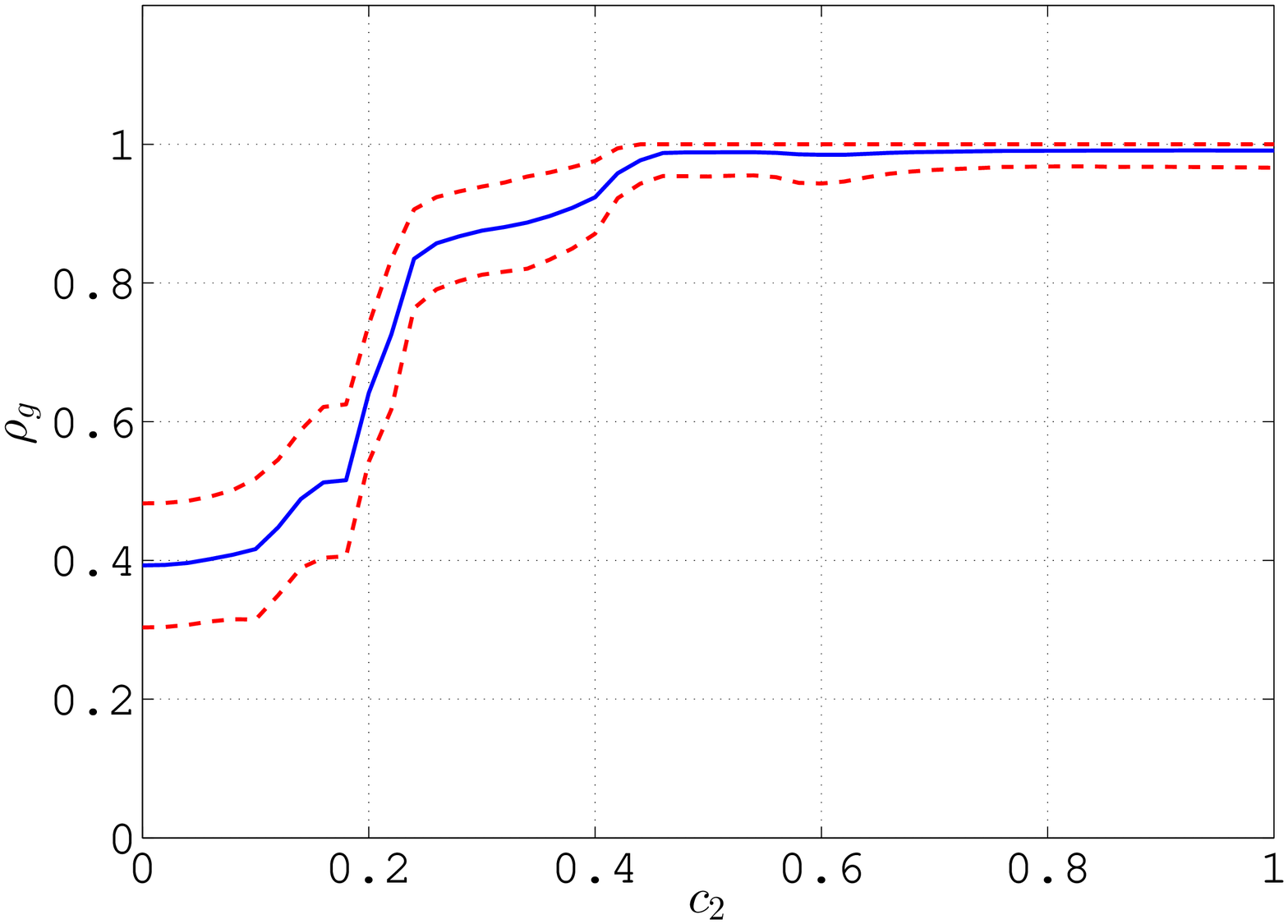}}
 \caption{Mean value and standard deviation of the group synchronization in a heterogeneous unweighted complete graph of $N=6$ HKB oscillators for different values of the coupling strengths $c_1$ and $c_2$ - partial state coupling}
 \label{fig:rhoG_coupStr_PSC}
\end{figure}

In Fig. \ref{fig:rhoG_coupStr_PSC}a we show how, when considering a partial state coupling as interaction protocol, the group synchronization varies for increasing values of $c_1$ while keeping $c_2$ constantly equal to $0$, and vice versa in Fig. \ref{fig:rhoG_coupStr_PSC}b. As we can see, the influence that $c_2$ has on the group synchronization is stronger than the one provided by $c_1$. In terms of multiplayer games, this means that human players react better to changes in the velocity of their neighbors rather than in their position. This results is confirmed also in Fig. \ref{fig:rhoG_coupStr_PSC_HKB}a in which we show how the mean value of the group synchronization changes as $c_1$ and $c_2$ are simultaneously varied (darker colors refer to lower values of the average group synchronization, whilst lighter ones refer to higher values).

\begin{figure}
 \centering
 \subfloat[group synchronization]{\includegraphics[width=.5\textwidth]{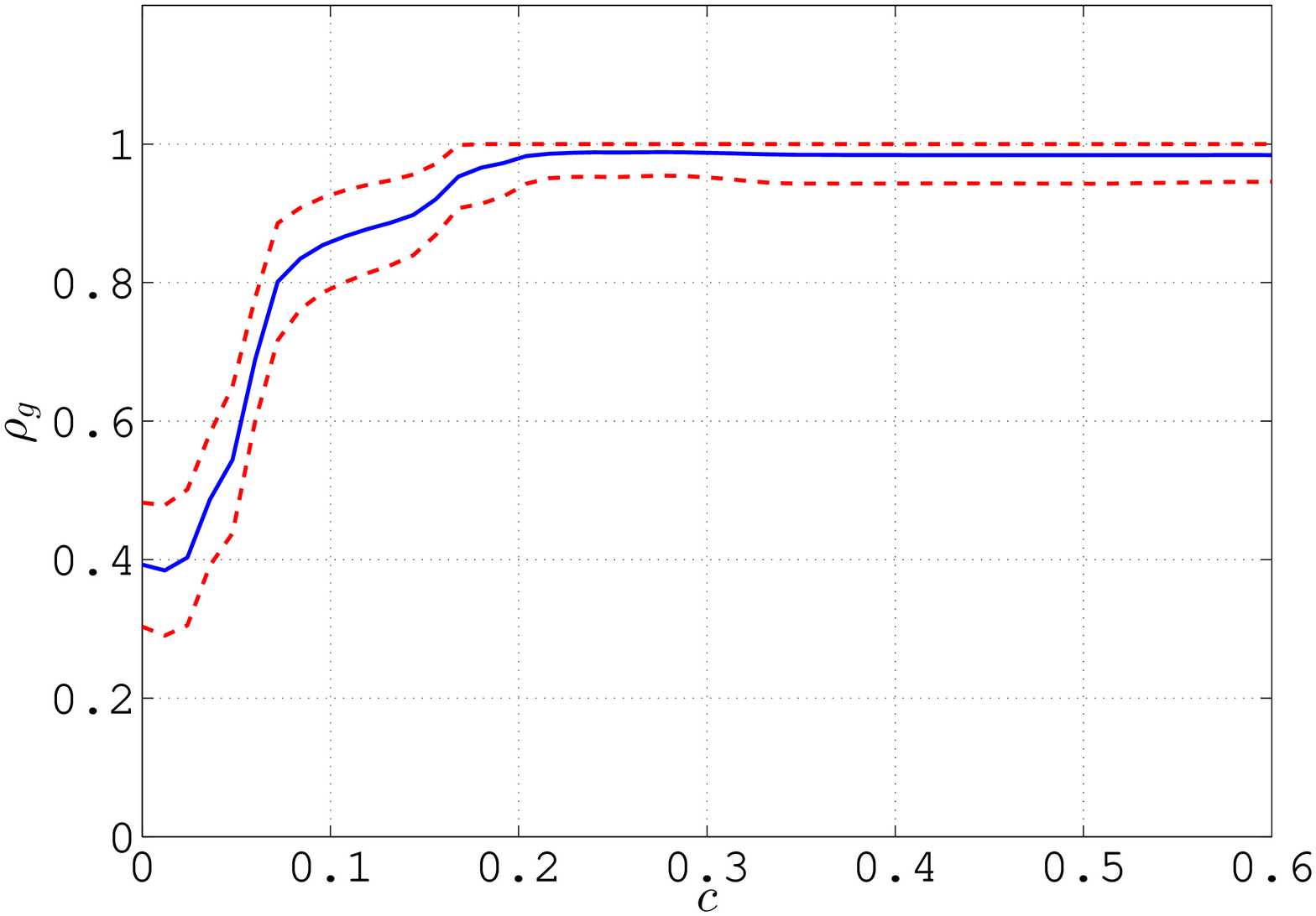}}
 \subfloat[group synchronization - zoom]{\includegraphics[width=.5\textwidth]{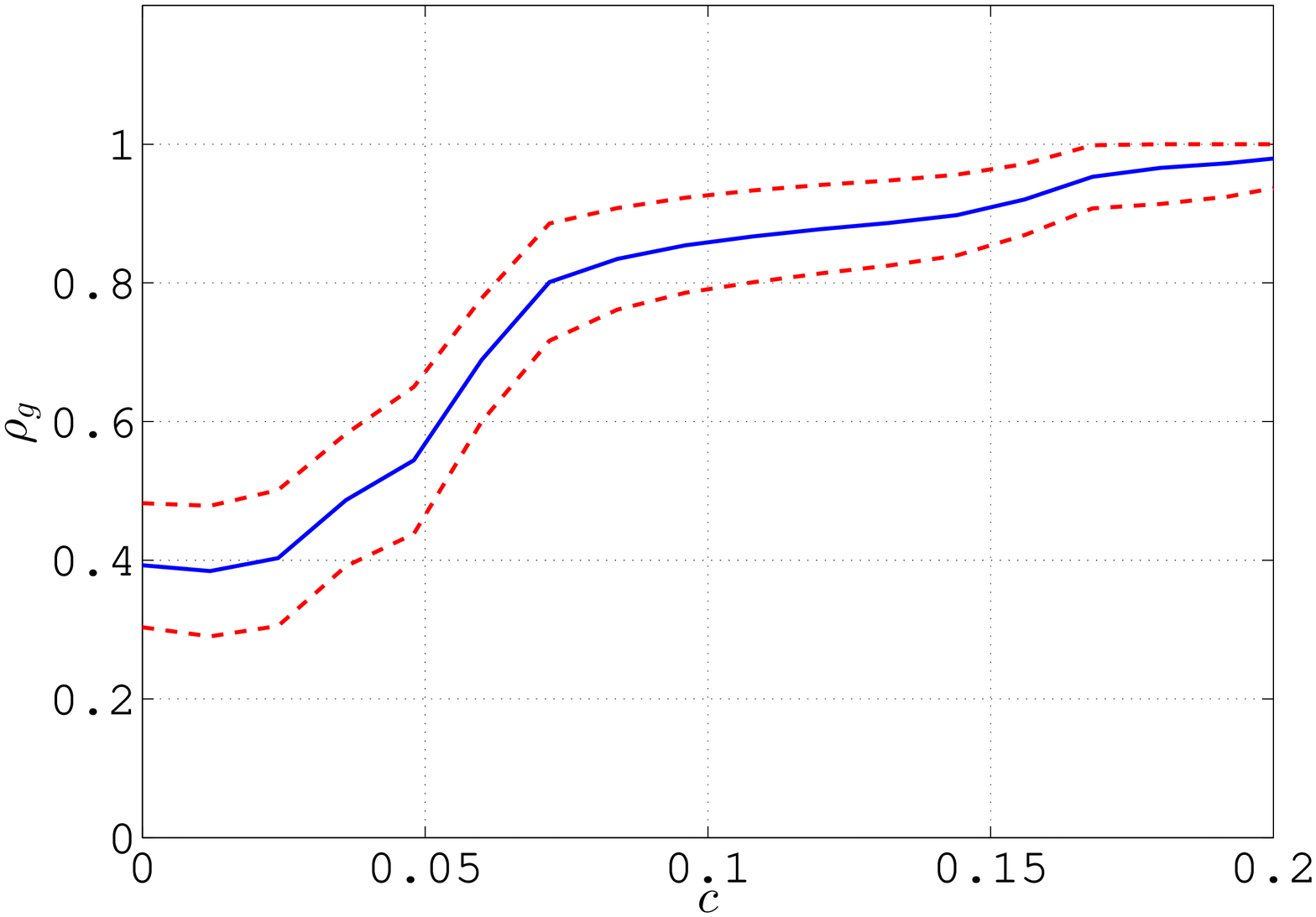}}
 \caption{Mean value and standard deviation of the group synchronization in a heterogeneous unweighted complete graph of $N=6$ HKB oscillators for different values of the coupling strength $c$ while keeping $a=b=-1$ constant - HKB coupling}
 \label{fig:rhoG_coupStr_HKB}
\end{figure}

Finally from Fig. \ref{fig:rhoG_coupStr_HKB}a it is clear that, when considering an HKB coupling as interaction protocol while keeping $a$ and $b$ constantly equal to $-1$, the group synchronization increases as the coupling strength $c$ increases. In particular, like in the case of a full state interaction protocol, in order for the network to well synchronize it is sufficient to choose a relatively small value for the coupling strength ($c \simeq 0.15$, see Fig. \ref{fig:rhoG_coupStr_HKB}b). In terms of multiplayer games, this means that the stronger the influence that each player has on the others, the better the overall synchronization of the human participants. This results is confirmed also in Fig. \ref{fig:rhoG_coupStr_PSC_HKB}b in which we show how the mean value of the group synchronization changes as $a$ and $b$ are simultaneously varied while keeping $c$ constantly equal to $1$ (darker colors refer to lower values of the average group synchronization, whilst lighter ones refer to higher values). As we can see, as the values of $|a|$ and $|b|$ increase, then the average of the group synchronization increases as well.

\begin{figure}
 \centering
 \subfloat[partial state coupling - $c_1,c_2$ variable]{\includegraphics[width=.5\textwidth]{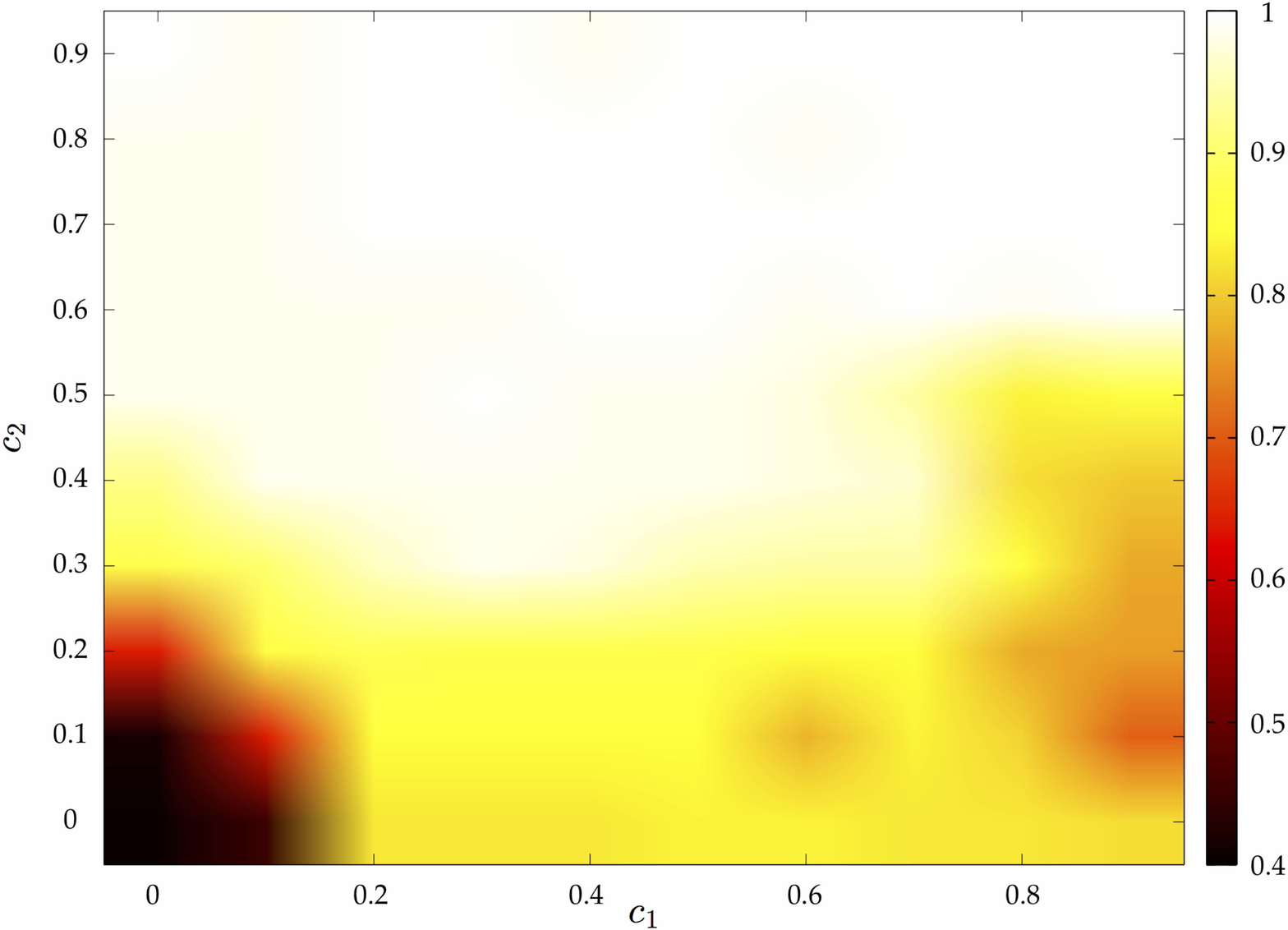}}
 \subfloat[HKB coupling - $a,b$ variable while keeping $c=1$ constant]{\includegraphics[width=.5\textwidth]{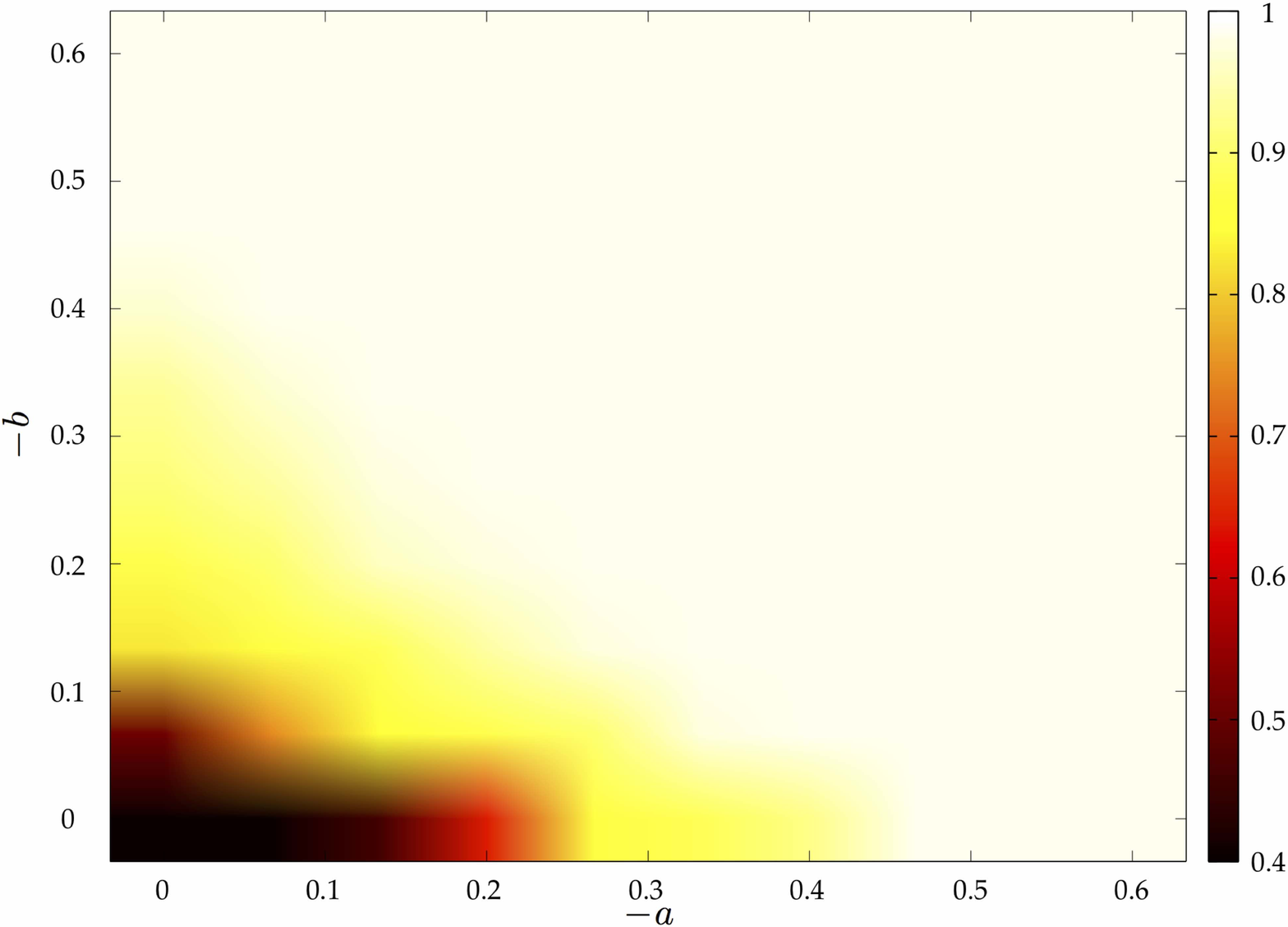}}
 \caption{Mean value of the group synchronization $\rho_g(t)$ in a heterogeneous unweighted complete graph of $N=6$ HKB oscillators for different values of the coupling strengths}
 \label{fig:rhoG_coupStr_PSC_HKB}
\end{figure}

\section{Entrainment of the network}
\label{sec:entrainment}
In this section we analyze the effects on the group synchronization of the network of adding an external sinusoidal signal to the dynamics of each node. Our main objective is understanding whether, and possibly under what conditions, such entrainment signal leads to a better synchronization level of a heterogeneous network of HKB oscillators with respect to the case in which the signal is absent. This will help us understand whether an external auditory or visual stimulus can improve the coordination level in multiplayer games when considering networks of human people involved in some synchronization task. 

Following the approach of \cite{RdBS10}, we model such a scenario in the following way:

\begin{equation}
\label{eqn:hkbExtSig}
f_i(t,x_i)=
\begin{bmatrix}
x_{i_2} \\
- (\alpha_i x_{i_1}^2+\beta_i x_{i_2}^2-\gamma_i)x_{i_2} - \omega_i^2 x_{i_1} + \zeta
\end{bmatrix}+u_i
\end{equation}
where $\zeta(t)=A_\zeta \sin \left(\omega_\zeta t \right)$ represents the entrainment signal and $u_i(t)$ one of the interaction protocols introduced earlier in this paper.

We introduce the \emph{entrainment index} $\rho_E \in [0,1]$ in order to quantify the overall synchronization level between the network and the external signal $\zeta(t)$:

\begin{equation}
\label{eqn:entrIndexHKB}
\rho_{E_k} := \left | \frac{1}{T} \int_{0}^{T} e^{  j [ \theta_k(t)- \theta_\zeta (t) ]  } \ dt \right |, \ \rho_{E} := \frac{1}{N} \sum_{k=1}^{N} \rho_{E_k}
\end{equation}
where $\theta_k(t)$ is the phase of the $k$-th node, $\theta_\zeta(t)$ is the phase of $\zeta(t)$, $T$ is the duration of the experiment and $N$ is the number of nodes in the network. The closer $\rho_E$ is to $1$, the better the synchronization of the group with the entrainment signal.

In what follows we simulate a heterogeneous network of $N=6$ HKB oscillators whose parameters and initial values are heuristically set as described in Table \ref{table:6nodesTable} and we set $T=200s$. We suppose that the network is simple, connected, unweighted and undirected and we assume that each node is connected to all the others (complete graph), which we believe well represents the topology implemented in the rocking chairs experiments of \cite{RGFGM12}.

\begin{figure}
 \centering
 \includegraphics[width=.7\textwidth]{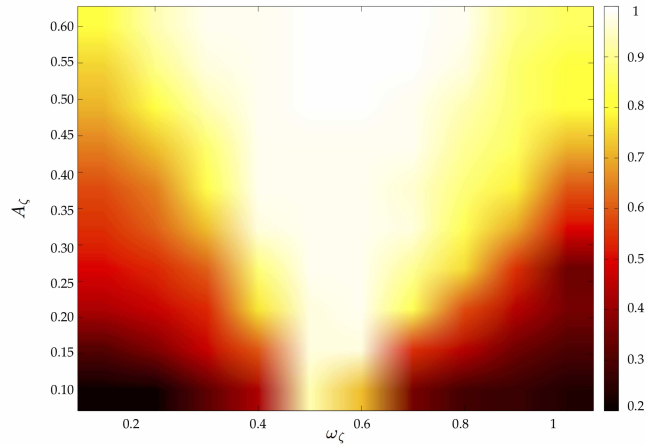}
 \caption{Entrainment index in a heterogeneous unweighted complete graph of $N=6$ HKB oscillators - full state coupling ($c=0.15$)}
 \label{fig:rhoEplotFSC}
\end{figure}

In Fig. \ref{fig:rhoEplotFSC} we show the entrainment index for different values of the frequency $\omega_\zeta$ and the amplitude $A_\zeta$ of the entrainment signal $\zeta(t)$ when considering a full state coupling as interaction protocol with $c=0.15$ (darker colors refer to lower values of $\rho_E$, whilst lighter ones refer to higher values). It is easy to see that, for each value of $\omega_\zeta$, the entrainment index increases as $A_\zeta$ increases as well, meaning that the network better synchronizes with $\zeta(t)$ for increasing values of its amplitude. Moreover, for a given value of $A_\zeta$, the highest values of $\rho_E$ are achieved when the frequency of the entrainment signal is close to the average value $\Omega$ of the natural frequencies $\omega_i$ of the nodes (in this case $\Omega \simeq 0.5$ ). These results confirm the findings of \cite{SRAG07,VSR15}, in which it is shown that spontaneous unintentional synchronization between the oscillation of a handheld pendulum swung by an individual and of an external sinusoidal stimulus (which corresponds to our external entrainment signal) emerges only when the frequency of the signal itself is similar to the preferred frequency of the player. For the sake of brevity we do not show how $\rho_E$ varies as $\omega_\zeta$ and $A_\zeta$ vary as well when considering partial state coupling and HKB coupling as interaction protocols, since we obtain results which are analogous to the ones shown in Fig. \ref{fig:rhoEplotFSC} for a full state coupling.

\begin{figure}
 \centering
 \includegraphics[width=.7\textwidth]{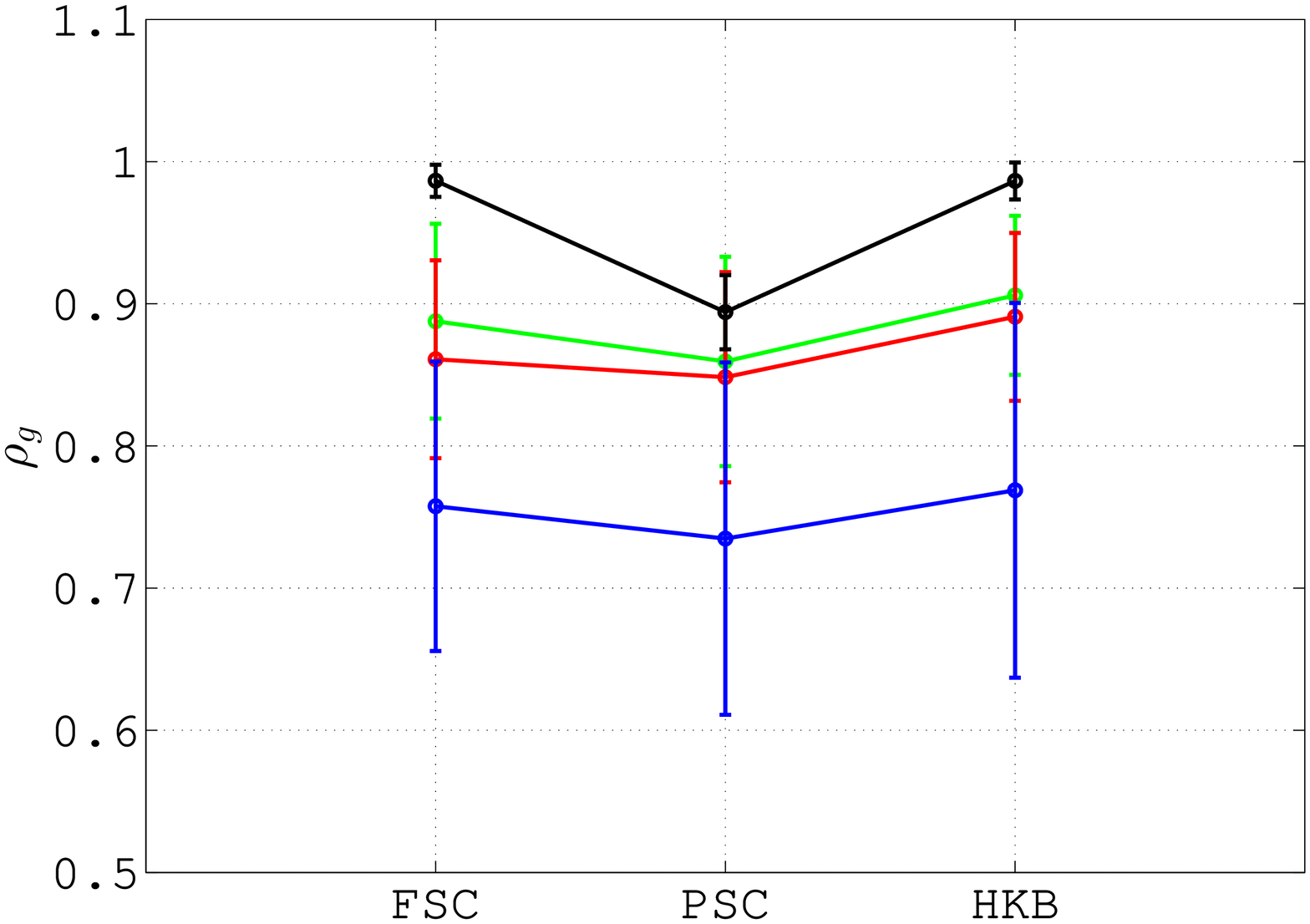}
 \caption{Mean value and standard deviation of the group synchronization in a heterogeneous unweighted complete graph of $N=6$ HKB oscillators - \emph{FSC}: full state coupling ($c=0.15$) - \emph{PSC}: partial state coupling ($c_1=c_2=0.15$) - \emph{HKB}: HKB coupling ($a=b=-1, c=0.15$) - green line: no entrainment signal, red line: $\omega_\zeta=0.1, A_\zeta=0.1$, blue line: $\omega_\zeta=0.3, A_\zeta=0.2$, black line: $\omega_\zeta=0.5, A_\zeta=0.3$}
 \label{fig:rhoGzetaComp}
\end{figure}

In Fig. \ref{fig:rhoGzetaComp} we show mean value and standard deviation of the group synchronization $\rho_g(t)$ when considering different parameters of the entrainment signal (green line: no entrainment signal, red line: $\omega_\zeta=0.1, A_\zeta=0.1$, blue line: $\omega_\zeta=0.3, A_\zeta=0.2$, black line: $\omega_\zeta=0.5, A_\zeta=0.3$) for all the three coupling protocols we have presented (\emph{FSC}: full state coupling ($c=0.15$) - \emph{PSC}: partial state coupling ($c_1=c_2=0.15$) - \emph{HKB}: HKB coupling ($a=b=-1, c=0.15$)). Since we are interested in understanding whether an additive external sinusoidal signal can improve the synchronization level of the network with respect to the case in which it is absent, the values of the coupling strengths chosen in these simulations for all the three interaction protocols are the same as the ones previously used in absence of entrainment signal (see Fig. \ref{fig:syncLev6}a). From Fig. \ref{fig:rhoGzetaComp} it is easy to observe that, for all the three interaction protocols, the group synchronization of the network improves only when the entrainment index $\rho_E$ has high values (see black line compared to the green one). In the other two cases (blue line and red line), the entrainment signal acts as a disturbance for the dynamics of the nodes and the group synchronization decreases. In terms of multiplayer games for networks of human people, this means that it is possible to further enhance the coordination level of participants only when the entrainment signal has an oscillation frequency which is close to the average of the natural oscillation frequencies of the individuals involved and its amplitude is sufficiently high.

\section{Convergence analysis}
\label{sec:mainresults}
As anticipated earlier, other than finding a mathematical model able to reproduce features observed experimentally in some multiplayer games studied in the existing literature, we are also interested in understanding under what conditions synchronization is observed to emerge. In particular, in this section we are going to show that global bounded synchronization can be analytically guaranteed for a heterogeneous network of diffusively coupled $N$ HKB oscillators by making use of two different approaches, namely \emph{contraction theory} and \emph{Lyapunov theory}.

\subsection{Contraction theory}
Let $|\cdot|$ be a norm defined on a vector $w \in \mathbb{R}^n$ with induced matrix norm $||\cdot||$. As stated in \cite{RdBS13}, given a matrix $P \in \mathbb{R}^{n \times n}$, the \emph{induced matrix measure} is defined as $\mu(P) := \lim_{h \to 0^+} \frac{\left( ||I+hP|| -1 \right)}{h}$.
\begin{dfn}
Let us consider the system $\dot{w} = F(t,w)$ defined $\forall t \ge 0, \ \forall w \in C \subset \mathbb{R}^n$. We say that such system is contracting with respect to a norm $|\cdot|$ with associated matrix measure $\mu (\cdot)$ iff
\begin{equation}
\label{eqn:ContrDef}
\exists \ k>0: \mu \left( J(w,t) \right) \le -k, \quad \forall w \in C, \forall t \ge 0
\end{equation}
where $J$ is the Jacobian of the system.
\end{dfn}

They key stage in the application of contraction theory to synchronization of networks of oscillators is the construction of the so-called \emph{virtual system} \cite{JS04}.

\begin{dfn}
\label{dfn:vsdfn}
Let us consider a heterogenous network described by Eq. \ref{eqn:networkeq}. The virtual system is defined as the system that has the trajectories of the nodes as particular solutions.
\end{dfn}

Formally, the virtual system depends on the state variables of the oscillators in the network and on some virtual state variables. Substitution of the state variables of a certain node $i$ into the virtual ones returns the dynamics of the $i$-th node of the network itself.

It is worth pointing out that virtual systems are originally defined for networks of identical systems in order to prove complete synchronization: indeed, it is possible to prove that if a virtual system is contracting, then $\lim_{t \to \infty} \eta(t) = 0$.
However, it is possible to define virtual systems also for networks of nonidentical oscillators by averaging the values of all the different parameters in order to prove bounded synchronization (see heterogeneous network of repressilators in \cite{RdB09} ).

In \cite{RdB09} a simple algorithm is provided that allows to check whether the virtual system of a certain heterogeneous network of $N$ agents is contracting, which leads to global bounded synchronization of the network itself. In particular, rather than verifying Eq. \ref{eqn:ContrDef} in order to check whether the virtual system is contracting, the algorithm consists in checking the truth of some statements regarding the single elements of the Jacobian of the virtual system and imposing some conditions:

\begin{enumerate}
\item build the Jacobian $J$ of the virtual system;
\item check whether the following statements are true or false
\begin{itemize}
\item S1: $J(i,i)<0$;
\item S2: $J(i,i)=-\rho_i, \ 0<\rho_i<\infty$;
\item S3: $J(i,j)\neq 0 \Rightarrow J(j,i)=0$;
\end{itemize}
\item generate a set of conditions for synchronization (CFS) according to the truth or the falsity of the previous statements.
\end{enumerate}

In particular, denoting with $n_{0_i}$ the number of $0$ elements in the $i-th$ row of the Jacobian of the virtual system, the CFS are generated in the following way:

\begin{itemize}
\item $S1, S2, S3 \Rightarrow |J(i,j)|<\frac{\rho_i}{n-n_{0_i}-1}$ ;
\item $S1, S2, \bar{S3} \Rightarrow |J(i,j)|>\frac{\rho_i}{n-n_{0_i}-1}, |J(j,i)|<\frac{\rho_j}{n-n_{0_j}-1}$ or vice versa ;
\item $S1, \bar{S2}, S3 \Rightarrow |J(i,j)|<\frac{|J(i,i)|}{n-n_{0_i}-1}$ ;
\item $S1, \bar{S2}, \bar{S3} \Rightarrow |J(i,j)|>\frac{|J(i,i)|}{n-n_{0_i}-1}, |J(j,i)|<\frac{|J(j,j)|}{n-n_{0_j}-1}$ or vice versa .
\end{itemize}

Note that if statement \emph{S1} is not true, it is not possible for the virtual system to be contracting.

\begin{thm}
\label{thm:ctheorythm}
Suppose to have a heterogeneous network of $N$ HKB oscillators interconnected via a full state coupling as described in Eq. \ref{eqn:gsldc}. Let us also assume that the network topology is a connected, simple, undirected and unweighted complete graph. If the following hypothesis is satisfied

\begin{equation}
\label{eqn:coupStrenCT2}
\frac{N-1}{N} \left( 2\tilde{\alpha}z_{1_{max}}z_{2_{max}}+\tilde{\omega}^2+\tilde{\gamma} \right) < c < \frac{N-1}{N}
\end{equation}
where $\tilde{\alpha}, \tilde{\omega}, \tilde{\gamma}$ are the average values of parameters $\alpha_i$, $\omega_i$, $\gamma_i$, respectively, and $z_{1_{max}}, z_{2_{max}}$ are the bounds of the two virtual state variables, then global bounded synchronization is achieved by the network.
\end{thm}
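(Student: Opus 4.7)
The plan is to apply the contraction-theoretic algorithm reproduced immediately before the theorem statement. First I exploit the complete-graph topology: since $a_{ij}=1$ for $i\neq j$ and $\mathcal{N}_i=N-1$, the full-state coupling of Eq. \ref{eqn:gsldc} collapses to $u_i = -\frac{cN}{N-1}(x_i-\bar x)$, a diffusive pull of each node toward the mean trajectory. Replacing the heterogeneous parameters by their averages $\tilde\alpha,\tilde\beta,\tilde\gamma,\tilde\omega$ and treating $\bar x(t)$ as an exogenous input, I build the virtual system (in the sense of Definition \ref{dfn:vsdfn}, extended to non-identical agents)
\begin{equation*}
\dot z = \begin{bmatrix} z_2 \\ -(\tilde\alpha z_1^2+\tilde\beta z_2^2-\tilde\gamma)z_2 - \tilde\omega^2 z_1\end{bmatrix} - \frac{cN}{N-1}(z-\bar x),
\end{equation*}
of which every nodal trajectory $x_i$ is (approximately) a particular solution.

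Next I compute the Jacobian
\begin{equation*}
J(z,t)=\begin{bmatrix} -\frac{cN}{N-1} & 1 \\ -2\tilde\alpha z_1 z_2 - \tilde\omega^2 & -(\tilde\alpha z_1^2+3\tilde\beta z_2^2-\tilde\gamma) - \frac{cN}{N-1}\end{bmatrix}
\end{equation*}
and test the three statements S1, S2, S3 of the recipe under the standing assumption $|z_k|\le z_{k_{max}}$. S1 is immediate for $i=1$ and follows for $i=2$ from the lower bound on $c$, since the worst case of $-J_{22}$ is $\tfrac{cN}{N-1}-\tilde\gamma$, which the hypothesis keeps strictly positive. S2 then holds with $\rho_1 = \tfrac{cN}{N-1}$ constant and $\rho_2 \geq \tfrac{cN}{N-1}-\tilde\gamma$ bounded on the state region. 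S3 fails because $J_{12}$ and $J_{21}$ are simultaneously non-zero, placing us in the $S1,S2,\bar{S3}$ branch of the CFS.

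Specialising that branch to $n=2$ and $n_{0_i}=0$ yields the requirement $|J_{12}|>\rho_1$ together with $|J_{21}|<\rho_2$. The first reads $1 > \tfrac{cN}{N-1}$, which is the upper bound $c<\tfrac{N-1}{N}$ of Eq. \ref{eqn:coupStrenCT2}. The second, using $|J_{21}|\le 2\tilde\alpha z_{1_{max}} z_{2_{max}}+\tilde\omega^2$ together with the worst-case bound on $\rho_2$, rearranges exactly to $c > \frac{N-1}{N}\bigl(2\tilde\alpha z_{1_{max}} z_{2_{max}}+\tilde\omega^2+\tilde\gamma\bigr)$, the lower bound of the theorem. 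The algorithm then certifies that the virtual system is contracting, and since the trajectories of all nodes are (approximately) particular solutions of this same virtual system, they converge into a bounded neighbourhood of one another, proving global bounded synchronization in the sense of Eq. \ref{eqn:gbseqdef}.

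The main obstacle I anticipate is the \emph{a priori} justification of the state bounds $z_{1_{max}}, z_{2_{max}}$, on which the entire CFS derivation rests: S2 and the worst-case estimate of $|J_{21}|$ are meaningful only once forward invariance of a compact set has been established. I would discharge this with a separate dissipativity argument on the HKB vector field, exploiting the fact that for large $|x_i|$ the nonlinear damping $-(\alpha_i x_{i_1}^2+\beta_i x_{i_2}^2-\gamma_i)x_{i_2}$ dominates and pulls the trajectory back, a tendency only reinforced by the diffusive term toward the necessarily bounded mean $\bar x$. Turning this intuition into an explicit invariant region expressed through $\tilde\alpha,\tilde\beta,\tilde\gamma$ and $c$ would render the choice of $z_{1_{max}}, z_{2_{max}}$ constructive and complete the proof.
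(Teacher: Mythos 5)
Your proposal follows essentially the same route as the paper: the same averaged-parameter virtual system (your $-\frac{cN}{N-1}(z-\bar x)$ form is exactly the paper's $-\hat{c}Nz+\hat{c}\sum_j x_j$ with $\hat{c}=c/(N-1)$), the same Jacobian, and the same CFS algorithm yielding the two bounds on $c$. The only divergence is that you classify S2 as true where the paper marks it false (since $J(2,2)$ is state-dependent), but the $S1,S2,\bar{S3}$ and $S1,\bar{S2},\bar{S3}$ branches produce identical inequalities here, so the conclusion is unaffected; your closing remark about needing to justify $z_{1_{max}},z_{2_{max}}$ a priori flags an assumption the paper likewise makes without proof.
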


\begin{proof}
Let us consider an unweighted complete graph of $N$ HKB oscillators interconnected via a full state coupling, that is

\begin{equation*}
\dot{x}_i=
\begin{bmatrix}
x_{i_2} \\
- (\alpha_i x_{i_1}^2+\beta_i x_{i_2}^2-\gamma_i)x_{i_2} - \omega_i^2 x_{i_1}
\end{bmatrix}
\end{equation*}

\begin{equation}
-\hat{c} \sum_{j=1}^N a_{ij} \left(x_i-x_j \right), \quad \forall i \in[1,N]
\end{equation}
where $x_i \in \mathbb{R}^2$ is the state variable of node $i$ and $\hat{c} := \frac{c}{N-1}$ since each node in a connected complete graph has $N-1$ neighbors. 
The virtual system reads
\begin{equation}
\dot{z} = \begin{bmatrix}
z_2-\hat{c}Nz_1+\hat{c}\sum_{j=1}^{N} x_{j_1} \\
-\left( \tilde{\alpha}z_1^2+\tilde{\beta}z_2^2-\tilde{\gamma} \right)z_2 - \tilde{\omega}^2 z_1 - \hat{c}Nz_2 + \hat{c} \sum_{j=1}^{N} x_{j_2}
\end{bmatrix}
\end{equation}
where $z \in \mathbb{R}^2$ is the state variable of the virtual system and $\tilde{\alpha}:=\frac{1}{N}\sum_{i=1}^{N} \alpha_i$, $\tilde{\beta}:=\frac{1}{N}\sum_{i=1}^{N} \beta_i$, $\tilde{\gamma}:=\frac{1}{N}\sum_{i=1}^{N} \gamma_i$, $\tilde{\omega}:=\frac{1}{N}\sum_{i=1}^{N} \omega_i$. The Jacobian of the virtual system is:

\begin{equation}
\label{eqn:jacVS}
J(t,z) = \begin{bmatrix}
-\hat{c}N & 1 \\
-(2\tilde{\alpha}z_2z_1+\tilde{\omega}^2) & -\tilde{\alpha}z_1^2-3\tilde{\beta}z_2^2-\hat{c}N+\tilde{\gamma}
\end{bmatrix}
\end{equation}

In order to prove global bounded synchronization of the network, we need the virtual system to be contracting. In order to do so, we apply the algorithm presented in \cite{RdB09} to Eq. \ref{eqn:jacVS}.
When $i=1,j=2$, it is immediate to see that statement \emph{S1} is true, while \emph{S2} and \emph{S3} are false ($c$ might be in general time varying), leading to $|J(1,2)|>|J(1,1)|$ and $|J(2,1)|<|J(2,2)|$. When $i=2,j=1$ instead, inequalities to satisfy and CFS depend on the sign of $\tilde{\alpha}$ and $\tilde{\beta}$. Supposing without loss of generality that $\tilde{\alpha},\tilde{\beta}>0$ as usually done in literature \cite{FJ08, KdGRT09}, it is immediate to see that an inequality corresponding to the fulfilment of \emph{S1} needs to be added to the list of CFS generated by the algorithm (a worst case scenario is $-\hat{c}N+\tilde{\gamma}<0$), and that both \emph{S2} and \emph{S3} are again false, leading to the two same conditions. This means that the network achieves global bounded synchronization when the following system is satisfied:

\begin{equation*}
\begin{cases}
\hat{c}>\frac{\tilde{\gamma}}{N}\\
1>\hat{c}N\\
| 2\tilde{\alpha}z_1z_2+\tilde{\omega}^2 | < | \tilde{\alpha}z_1^2+3\tilde{\beta}z_2^2-\tilde{\gamma}+\hat{c}N |
\end{cases}
\end{equation*}

\begin{equation}
\Leftrightarrow
\begin{cases}
\frac{\tilde{\gamma}}{N}<\hat{c}<\frac{1}{N}\\
| 2\tilde{\alpha}z_1z_2+\tilde{\omega}^2 | < | \tilde{\alpha}z_1^2+3\tilde{\beta}z_2^2-\tilde{\gamma}+\hat{c}N |
\end{cases}
\end{equation}

Supposing that the dynamics of the virtual system is bounded, meaning that $|z_1(t)| \le z_{1_{max}}, |z_2(t)| \le z_{2_{max}}$ $\forall t \ge 0$, we can consider the following worst case scenario

\begin{equation}
\begin{cases}
\frac{\tilde{\gamma}}{N}<\hat{c}<\frac{1}{N}\\
2\tilde{\alpha}z_{1_{max}}z_{2_{max}}+\tilde{\omega}^2 < \hat{c}N - \tilde{\gamma}
\end{cases}
\end{equation}
which leads to

\begin{equation}
\label{eqn:coupStrenCT}
\frac{2\tilde{\alpha}z_{1_{max}}z_{2_{max}}+\tilde{\omega}^2+\tilde{\gamma}}{N}<\hat{c}<\frac{1}{N}
\end{equation}
and, as a consequence, to Eq. \ref{eqn:coupStrenCT2}.

So we can conclude that if the coupling strength $c$ fulfills Eq. \ref{eqn:coupStrenCT2}, the heterogeneous network of HKB oscillators overlying a complete graph achieves bounded synchronization.
\end{proof}

\begin{rem}
Note that when the number of nodes in the network $N$ is really high, then $\frac{N-1}{N} \rightarrow 1$. This means that global bounded synchronization is achieved when:

\begin{equation}
\label{eqn:coupStrenCT3}
2\tilde{\alpha}z_{1_{max}}z_{2_{max}}+\tilde{\omega}^2+\tilde{\gamma} < c < 1
\end{equation}

\end{rem}

\subsection{Lyapunov theory}
Let $\mathcal{D}$ be the set of diagonal matrices, $\mathcal{D}^+$ the set of positive definite diagonal matrices and $\mathcal{D}^-$ the set of negative definite diagonal matrices. Let us now define \emph{QUAD} and \emph{QUAD Affine} vector fields \cite{DLdBL14}.

\begin{dfn}
Given $n \times n$ matrices $P \in \mathcal{D}^+, W_i \in \mathcal{D}$, the vector field $f_i$ is said to be \emph{QUAD($P,W_i$)} iff
\begin{equation}
(z-w)^T P [f_i(t,z)-f_i(t,w)] \le (z-w)^T W_i (z-w)
\end{equation}
for any $z,w \in \mathbb{R}^n$ and for any $t \ge 0$.
\end{dfn}

\begin{dfn}
Given $n \times n$ matrices $P \in \mathcal{D}^+, W_i \in \mathcal{D}$, the vector field $f_i$ is said to be \emph{QUAD($P,W_i$) Affine} iff $f_i(t,x_i)=h_i(t,x_i)+g_i(t,x_i)$ and
\begin{itemize}
\item $h_i$ is QUAD($P,W_i$);
\item $\exists \ M<\infty : ||g_i(t,z)||_2<M, \ \forall z \in \mathbb{R}^n, \forall t \ge 0$
\end{itemize}
\end{dfn}

Let us consider a heterogeneous network of $N$ agents interconnected via a linear coupling:

\begin{equation}
\label{eqn:qvflc}
\dot{x}_i(t) = f_i(t,x_i)-\frac{c}{\mathcal{N}_i}\sum_{j=1}^{N}a_{ij}\Gamma(x_i-x_j), \quad c>0
\end{equation}

where $\Gamma \in \mathbb{R}^{n \times n}$. Note that this is a generalization of the full state coupling previously introduced, which can be obtained by setting $\Gamma=I_n$. As reported in \cite{DLdBL14} in details, in order to prove global bounded synchronization of a network of $N$ nonidentical QUAD Affine systems coupled via a linear interaction protocol, we need $h_i(t,x_i)$ to be QUAD($P,W_i$) with $W_i \in \mathcal{D}^-$ for all the nodes in the network at any time instant. However, in a heterogeneous network of $N$ HKB oscillators with vector fields described by Eq. \ref{eqn:hkbInternalDynamics}, regardless of the way we define $h_i$ and $g_i$ it is never possible to satisfy the following condition

\begin{equation}
(z-w)^T P [h_i(t,z)-h_i(t,w)] \le (z-w)^T W_i (z-w)
\end{equation}

with definite negative matrices $W_i$. Indeed, the right-hand term is always negative, while the left-hand one can be positive for any value of $P>0$.
On the other hand, in order to avoid conditions on the sign of the matrices $W_i$, it is necessary to write the dynamics of the nodes in the following way

\begin{equation}
f_i(t,x_i)=h_i(t,x_i)+g_i(t,x_i) \quad \forall i=1,2,...,N
\end{equation}

with $h_i(t,z)=h_j(t,z)=h(t,z) \ \forall i,j \in [1,N], \ \forall z \in \mathbb{R}^n$, and with all the terms $g_i$ being bounded, at any time instant. 

In particular, in \cite{DLdBL14} the authors formalize the following theorem.

\begin{thm}
\label{thm:qvflcthm}
Let us consider a heterogeneous network of $N$ agents interconnected via a linear coupling as described in Eq. \ref{eqn:qvflc}. Let us suppose that $f_i(t,x_i)=h(t,x_i)+g_i(t,x_i)$ and that:
\begin{enumerate}
\item the network is made up of $N$ QUAD($P,W$) Affine systems, with $P \in \mathcal{D}^+$ and $W \in \mathcal{D}$;
\item $\Gamma$ is a positive semidefinite diagonal matrix;
\item if $W$ is made up of $l \in [0,n]$ non-negative elements, which without loss of generality can be collected in its $l \times l$ upper-left block, then $\Gamma$ is made up of $\bar{l}$ positive elements, where $l \le \bar{l} \le n$, which again without loss of generality can be collected in its $\bar{l} \times \bar{l}$ upper-left block;
\item $\exists \ 0<\bar{M}<\infty : ||g_i(t,x_i)||_2<\bar{M} \ \forall i=1,2,...,N, \forall t \ge 0$.
\end{enumerate}

Then, we can claim that global bounded synchronization is achieved by the network. In particular, if we define matrix $L_{\mathcal{N}}=\{ l_{\mathcal{N}_{ij}} \}$ as

\begin{equation}
l_{\mathcal{N}_{ij}} := \begin{cases}
\frac{1}{\mathcal{N}_i} \sum_{k=1}^N a_{ik}, & \mbox{if } i=j \\
-\frac{a_{ij}}{\mathcal{N}_i}, & \mbox{if } i \neq j \mbox{ and } (i,j) \mbox{ are neighbors}\\
0, & \mbox{otherwise}
\end{cases}
\end{equation}
we can state that $\exists \  0<\bar{c}<\infty, \epsilon>0 : \lim_{t \to \infty} \eta(t) \le \epsilon \ \forall c > \bar{c}$, where

\begin{equation}
\label{QUADthMinc}
\bar{c} = \min_{P,W} \ \max \left( \frac{\lambda_M\left( W_l \right)}{\lambda_2 \left(L_{\mathcal{N}} \otimes P_l \Gamma_l \right)},0 \right)
\end{equation}
with $W_l,P_l,\Gamma_l$ representing the $l \times l$ upper-left block of matrices $W,P,\Gamma$, respectively, and where for a given value of $c>\bar{c}$

\begin{equation}
\label{QUADthErrBound}
\epsilon = \min_{P,W} \ \frac{\sqrt{N} \bar{M} ||P||_2}{-\max \left( \lambda_M\left( W_l \right) -c \lambda_2 \left(L_{\mathcal{N}} \otimes P_l \Gamma_l \right), \lambda_M \left( W_{n-l} \right) \right)}
\end{equation}
with $W_{n-l}$ representing the $(n-l) \times (n-l)$ lower-right block of matrix $W$ and with the assumption that $c \lambda_2 \left(L_{\mathcal{N}} \otimes P_l \Gamma_l \right) > \lambda_M\left( W_l \right)$.

\end{thm}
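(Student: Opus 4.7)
The plan is a Lyapunov argument based on the weighted quadratic functional $V(e) = \tfrac{1}{2} e^T(I_N \otimes P)e$ evaluated on the stacked tracking error $e = x - \mathbf{1}_N \otimes \bar{x}$. Since $P \in \mathcal{D}^+$, $V$ is positive definite, and by construction $(\mathbf{1}_N^T \otimes I_n)e = 0$, so $e$ lives in the consensus-orthogonal subspace throughout the motion. The first step is to write the error dynamics by subtracting $\dot{\bar{x}} = \tfrac{1}{N}\sum_j \dot{x}_j$ from $\dot{x}_i$, and to decompose each drift as $f_i(t,x_i) = h(t,x_i) + g_i(t,x_i)$ so the common part $h$ can be treated via the QUAD structure while the heterogeneous remainder is handled as a bounded disturbance.

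Differentiating $V$ along (\ref{eqn:qvflc}) produces three contributions. The common drift $h$ gives $\sum_i (x_i - \bar{x})^T P[h(t,x_i) - h(t,\bar{x})]$, which by the QUAD$(P,W)$ inequality is bounded above by $e^T(I_N \otimes W)e$; it is essential here that the drift is shared across nodes so the ``virtual'' reference $\bar{x}$ cancels cleanly against a constant shift. The heterogeneous residuals $g_i$ are uniformly bounded by $\bar{M}$ in $\ell^2$-norm, and a Cauchy--Schwarz estimate on $\sum_i e_i^T P g_i$ yields a contribution of at most $\sqrt{N}\bar{M}\,\|P\|_2\,\|e\|_2$. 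The linear diffusive coupling yields exactly $-c\,e^T(L_{\mathcal{N}} \otimes P\Gamma)\,e$, and restricting to $\mathbf{1}^\perp$ replaces the trivial zero eigenvalue of $L_{\mathcal{N}}$ by the second smallest eigenvalue $\lambda_2(L_{\mathcal{N}})$.

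The core difficulty, and the step that forces the min--max structure of (\ref{QUADthMinc}), is the block decomposition of the inner coordinates. Since $W$ is diagonal with its $l$ non-negative entries collected in the upper-left block $W_l$ and the remaining $n-l$ entries of $W_{n-l}$ strictly negative, and since hypothesis 3 ensures that $\Gamma$ is positive on at least these same coordinates, one splits $e$ into ``directions that need the coupling to be dissipated'' and ``directions dissipated by $h$ alone.'' On the first block the coupling term must dominate $\lambda_M(W_l)$ through the eigenvalue $\lambda_2(L_{\mathcal{N}} \otimes P_l \Gamma_l)$, which yields the threshold (\ref{QUADthMinc}); on the second block $\lambda_M(W_{n-l}) < 0$ already supplies dissipation for free. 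Taking the infimum over admissible diagonal $P$ and $W$ produces the sharpest $\bar{c}$ accessible to the method.

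Putting the estimates together yields a differential inequality of the form $\dot{V} \le -\mu\,\|e\|_2^2 + \nu\,\|e\|_2$ with $\mu > 0$ whenever $c > \bar{c}$ and $\nu = \sqrt{N}\bar{M}\,\|P\|_2$. A standard ISS-type comparison argument then gives $\limsup_{t\to\infty}\|e(t)\|_2 \le \nu/\mu$, which reproduces the bound (\ref{QUADthErrBound}) after the outer minimization over $P$ and $W$. I expect the main obstacle to be carefully tracking the block decomposition so that the Kronecker-product eigenvalue $\lambda_2(L_{\mathcal{N}} \otimes P_l \Gamma_l)$ arises as the operative quantity rather than a looser product of separate eigenvalues, and verifying that the bad-block analysis is not spoiled by cross terms through off-block entries of $P$ or $\Gamma$; the diagonal hypotheses on $P$, $W$, and $\Gamma$ are precisely what eliminates this concern.
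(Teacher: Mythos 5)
The paper does not actually prove this theorem itself --- its ``proof'' is the single line ``See \cite{DLdBL14}'' --- so your sketch is being compared against the standard argument of that reference rather than anything in this paper. Your reconstruction is essentially that argument and is correct: the $P$-weighted error functional, the cancellation of the $i$-independent averaged terms via $\sum_i e_i = 0$, the Cauchy--Schwarz bound $\sqrt{N}\bar{M}\|P\|_2\|e\|_2$ on the heterogeneous residuals, and the $\lambda_2$ lower bound on the coupling quadratic form restricted to $\mathbf{1}^\perp$ split across the $l$ and $n-l$ blocks; the only points worth tightening are that the final ultimate bound strictly speaking inherits a condition-number factor of $P$ unless one states the bound in the $P$-weighted norm, and that for weighted graphs $L_{\mathcal{N}}$ is not symmetric (since $a_{ij}/\mathcal{N}_i \neq a_{ij}/\mathcal{N}_j$ in general), so the quadratic-form estimate must be phrased through its symmetric part.
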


\begin{proof}
See \cite{DLdBL14}.
\end{proof}

We can thus derive the following corollary.

\begin{cor}
Let us consider a heterogeneous network of $N$ HKB oscillators interconnected via a linear coupling. Supposing that the topology of the network is simple and undirected, and assuming that $\gamma_i=\tilde{\gamma}  \ \forall i \in [1,N]$, if the coupling strength satisfies the inequality 

\begin{equation}
\label{eqn:cbarQuad}
c \ge \bar{c} 
= \min_{W(1,1),P(1,1),P(2,2)>0} \frac{\max \left( W(1,1),\tilde{\gamma} P(2,2) \right) }{\lambda_2\left( L_{\mathcal{N}} \right) \min_{j=1,2}\left( P(j,j)\Gamma(j,j) \right)} 
\end{equation}
then global bounded synchronization is achieved by the network. In particular, we can claim that

\begin{equation}
\label{eqn:enormQuad}
\lim_{t \to \infty} \eta(t) \le \epsilon
= \min_{W(1,1),P(1,1),P(2,2),d_\epsilon>0} \frac{\sqrt{N}\bar{M} \max \left(P(1,1),P(2,2) \right) }{d_\epsilon}
\end{equation}
where 
\begin{equation}
d_\epsilon:=c\lambda_2 \left( L_{\mathcal{N}} \right) \min_{j=1,2 } \left( P(j,j) \Gamma(j,j) \right)
-\max \left( W(1,1),\tilde{\gamma} P(2,2) \right)
\end{equation}
\end{cor}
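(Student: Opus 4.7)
The strategy is to obtain the corollary as a direct specialization of Theorem \ref{thm:qvflcthm} to the HKB vector field under the homogeneity assumption $\gamma_i = \tilde\gamma$. The first step is to exhibit a decomposition $f_i(t,x_i) = h(t,x_i) + g_i(t,x_i)$ in which $h$ is common to all nodes and QUAD, while $g_i$ absorbs everything heterogeneous (and nonlinear) and is uniformly bounded. Since the only term of the HKB dynamics that becomes node-independent under $\gamma_i = \tilde\gamma$ and that would otherwise ruin the QUAD inequality is the negative-damping contribution $\tilde\gamma\, x_{i_2}$, I would take
\begin{equation*}
h(t,x) = \begin{bmatrix} 0 \\ \tilde\gamma\, x_2 \end{bmatrix}, \qquad g_i(t,x_i) = \begin{bmatrix} x_{i_2} \\ -(\alpha_i x_{i_1}^2 + \beta_i x_{i_2}^2)x_{i_2} - \omega_i^2 x_{i_1} \end{bmatrix}.
\end{equation*}
This makes $h$ node-independent, as required by hypothesis 1 of Theorem \ref{thm:qvflcthm}.

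Next I would verify the QUAD$(P,W)$ property of $h$. For any diagonal $P \in \mathcal{D}^+$ a direct computation gives
\begin{equation*}
(z-w)^T P\,[h(t,z)-h(t,w)] = \tilde\gamma\, P(2,2)\,(z_2-w_2)^2,
\end{equation*}
so $h$ is QUAD with any diagonal $W$ such that $W(2,2) \ge \tilde\gamma\, P(2,2)$ and $W(1,1) \ge 0$. I would set $W(2,2) = \tilde\gamma\, P(2,2)$ (the tight value) and keep $W(1,1)>0$ as a free optimisation parameter. Both diagonal entries of $W$ are then non-negative, so in the notation of the theorem $l = n = 2$, which by hypothesis 3 forces $\bar{l}=2$, i.e.\ $\Gamma$ must be diagonal positive definite — consistent with a full linear diffusive coupling acting on both state components. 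Boundedness of $g_i$ (hypothesis 4) then follows from the standard fact that the HKB oscillator, both in isolation and under a dissipative linear coupling, possesses a globally attracting compact invariant set: once trajectories are confined to a ball whose radius depends only on the $\vartheta_i$, the polynomial expressions in $g_i$ are bounded uniformly in $i$ and $t$ by some finite $\bar M$.

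The final step is purely algebraic: substitute $P,W,\Gamma$ into (\ref{QUADthMinc}) and (\ref{QUADthErrBound}). Because $W$ is diagonal, $\lambda_M(W_l) = \max\bigl(W(1,1),\tilde\gamma P(2,2)\bigr)$; because $P_l\Gamma_l$ is diagonal positive definite, the spectrum of $L_{\mathcal{N}} \otimes P_l\Gamma_l$ consists of the products of the eigenvalues of the two factors, so its smallest positive eigenvalue is $\lambda_2(L_{\mathcal{N}})\cdot \min_{j=1,2} P(j,j)\Gamma(j,j)$; and since $l=n$, the block $W_{n-l}$ is empty, so the second argument of the $\max$ in the denominator of $\epsilon$ drops out, leaving the expression defining $d_\epsilon$. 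Noting $\|P\|_2 = \max(P(1,1),P(2,2))$ for diagonal positive $P$ matches (\ref{eqn:cbarQuad}) and (\ref{eqn:enormQuad}) exactly, with the $\min_{P,W}$ of the theorem becoming the minimisation over the remaining free scalars $W(1,1),P(1,1),P(2,2)>0$ (and $d_\epsilon>0$ for the error bound).

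The main obstacle I expect is the rigorous justification of the boundedness step. The algebra of steps one and two is essentially mechanical once the decomposition is guessed, but the uniform bound $\bar M$ on $g_i$ is a global dynamical statement about the whole coupled network rather than a property of a single node. Proving it cleanly typically requires either an a priori forward-invariance argument (e.g.\ a Lyapunov function for the uncoupled HKB attractor combined with the dissipativity of the diffusive term $-(c/\mathcal{N}_i)\sum a_{ij}\Gamma(x_i-x_j)$) or an explicit compact invariant set posited as part of the hypotheses; reading boundedness directly off the corollary's statement would be circular, so this is where the bulk of the technical care has to go.
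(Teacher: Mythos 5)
Your proposal follows essentially the same route as the paper: the identical decomposition $h(t,x)=[0\ \tilde\gamma x_2]^T$ with $g_i$ absorbing the heterogeneous polynomial terms, the same choice $W(2,2)=\tilde\gamma P(2,2)$ reducing the QUAD condition to $W(1,1)(z_1-w_1)^2\ge 0$, the same observation that $l=\bar l=2$ forces $\Gamma\in\mathcal{D}^+$, and the same substitution into the theorem's formulas. The only difference is that the paper dispatches the boundedness of $g_i$ by citing boundedness of the HKB dynamics and taking suprema $p_{i_{max}},v_{i_{max}}$ to build $\bar M$, whereas you correctly flag that this step deserves more care for the coupled network; your critique is fair, but it does not change the argument.
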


\begin{proof}
First of all we need to write the dynamics of each node in the network as $f_i(t,x_i)=h(t,x_i)+g_i(t,x_i)$. This is possible if we suppose that $\gamma_i=\tilde{\gamma} \ \forall i \in [1,N]$ and define:

\begin{equation*}
h(t,x_i)=\begin{bmatrix}
0\\
\tilde{\gamma} x_{i_2}
\end{bmatrix}
\end{equation*}

\begin{equation*}
g_i(t,x_i)=\begin{bmatrix}
x_{i_2}\\
-(\alpha_i x_{i_1}^2 + \beta_i x_{i_2}^2)x_{i_2}-\omega_i^2x_{i_1}
\end{bmatrix}
\end{equation*}

Then we need to verify whether the nodes in the network are QUAD($P,W$) Affine systems. In particular, this means that we need $h$ to be QUAD($P,W$), with $P \in \mathcal{D}^+$ and $W \in \mathcal{D}$. Therefore, if we define $P=diag \{P(1,1),P(2,2) \}$ with $P(1,1),P(2,2)>0$, $W=diag \{W(1,1),W(2,2) \}$ and $h(t,z)=[0 \ \tilde{\gamma} z_2]^T \ \forall z \in \mathbb{R}^2$, we have to satisfy:

\begin{equation}
\label{eqn:hquadineq}
P(2,2) \tilde{\gamma} (z_2-w_2)^2
\le W(1,1)(z_1-w_1)^2+W(2,2)(z_2-w_2)^2
\end{equation}

Hence, if we choose $W(2,2)=\tilde{\gamma} P(2,2)$, it possible to reduce Eq. \ref{eqn:hquadineq} to

\begin{equation}
W(1,1)(z_1-w_1)^2 \ge 0
\end{equation}
which is true for any $W(1,1)>0$. This means that the first hypothesis of Theorem \ref{thm:qvflcthm} simply reduces to choosing any $P \in \mathcal{D}^+$ and $W=diag \{ W(1,1),\tilde{\gamma} P(2,2) \}$ for any $W(1,1)>0$. 

Since $W \in \mathbb{R}^{2 \times 2}$ is made up of $2$ non-negative elements, we have that $l=\bar{l}=2$. Therefore, in order to satisfy the second and the third hypotheses of Theorem \ref{thm:qvflcthm}, we need $\Gamma$ to be a diagonal positive definite matrix, that is $\Gamma \in \mathcal{D}^+$ (note that this is true when the nodes are connected through a full state coupling, since it corresponds to $\Gamma=I_2$). 

Finally, the last hypothesis to satisfy is related to the boundedness of the terms $g_i$ at any time instant. As already shown before, we have chosen:

\begin{equation}
\label{eqn:qvfgf}
g_i(t,x_i) = \begin{bmatrix}
x_{i_2}\\
-(\alpha_i x_{i_1}^2 + \beta_i x_{i_2}^2)x_{i_2}-\omega_i^2x_{i_1}
\end{bmatrix}
\end{equation}

Since the dynamics of each HKB oscillator is bounded \cite{ZATdB15}, we can define

\begin{equation*}
p_{i_{max}} := \sup_{t \ge 0} \left( |x_{i_1}(t)| \right), \ v_{i_{max}} := \sup_{t \ge 0} \left( |x_{i_2}(t)| \right)
\end{equation*}

and we define as well $p_M := \max_{i} \left( p_{i_{max}} \right)$, $v_M := \max_{i} \left( v_{i_{max}} \right)$, $\alpha_M := \max_{i} \left( |\alpha_i| \right)$, $\beta_M := \max_{i} \left( |\beta_i| \right)$, $\omega_M := \max_{i} \left( |\omega_i| \right)$. Therefore, from Eq. \ref{eqn:qvfgf} we get:

\begin{equation*}
||g_i||_2 \le |x_{i_2}| + |(\alpha_i x_{i_1}^2 + \beta_i x_{i_2}^2)x_{i_2}+\omega_i^2x_{i_1}|
\end{equation*}

\begin{equation*}
\le |x_{i_2}| + |\alpha_i x_{i_1}^2 + \beta_i x_{i_2}^2| |x_{i_2}| + \omega_i^2 |x_{i_1}|
\end{equation*}

\begin{equation}
\le (1+|\alpha_i| p_{i_{max}}^2 + |\beta_i| v_{i_{max}}^2) v_{i_{max}} + \omega_i^2 p_{i_{max}} := M_i
\end{equation}

Besides, we have that

\begin{equation}
\label{eqn:boundM}
M_i \le (1+\alpha_M p_M^2 + \beta_M v_M^2) v_M + \omega_M^2 p_M := \bar{M}
\end{equation}

This means that the fourth hypothesis of Theorem \ref{thm:qvflcthm} is always satisfied in the case of HKB oscillators, and the bound $\bar{M}$ is defined in Eq. \ref{eqn:boundM}.

In order to find an easier expression of the minimum value required for the coupling strength and of the upper-bound for the error norm, we can take advantage of the particular form of matrices $P$ and $W$:

\begin{equation*}
P = P_2 = \begin{bmatrix}
P(1,1) & 0 \\
0 & P(2,2)
\end{bmatrix}, \qquad P(1,1),P(2,2)>0
\end{equation*}

\begin{equation*}
W = W_2 = \begin{bmatrix}
W(1,1) & 0\\
0 & \tilde{\gamma}P(2,2)
\end{bmatrix}, \qquad W(1,1)>0
\end{equation*}

Therefore, from Eq. \ref{QUADthMinc} we have that the minimum value $\bar{c}$ for the coupling strength that guarantees bounded synchronization of the network is given by Eq. \ref{eqn:cbarQuad}, while for a given $c>\tilde{c}$, from Eq. \ref{QUADthErrBound} we have that the upper bound of the error norm is given by Eq. \ref{eqn:enormQuad}.

So we can conclude that if $c>\bar{c}>0$, where $\bar{c}$ is defined in Eq. \ref{eqn:cbarQuad}, global bounded synchronization is achieved.
\end{proof}

\subsection{Numerical validation}
As previously shown for a connected simple undirected heterogeneous network of $N$ HKB oscillators, by making use of contraction theory it is possible to guarantee bounded synchronization if we suppose that the underlying topology is an unweighted complete graph (all-to-all network). On the other hand, by making use of Lyapunov theory, bounded synchronization can be achieved regardless of the topology and the weights of the interconnections, although an assumption has to be made on one of the parameters of the nodes ($\gamma_i=\tilde{\gamma} \ \forall i \in [1,N]$). In order to be able to study the most possible general case, we consider a weighted random graph of $N=5$ HKB nonlinear oscillators characterized by $\gamma_i=\tilde{\gamma} \ \forall i \in [1,N]$. In particular, in such a random graph the odds of an edge connecting two nodes is worth $60 \%$ and its weight is randomly picked between $0$ and $2$ (see Fig. \ref{fig:topology}). As for the parameters and the initial conditions of the nodes, see Table \ref{table:5nodesTable}. 
Moreover, we set $T=200s$.

\begin{figure}
 \centering
 \includegraphics[width=.7\textwidth]{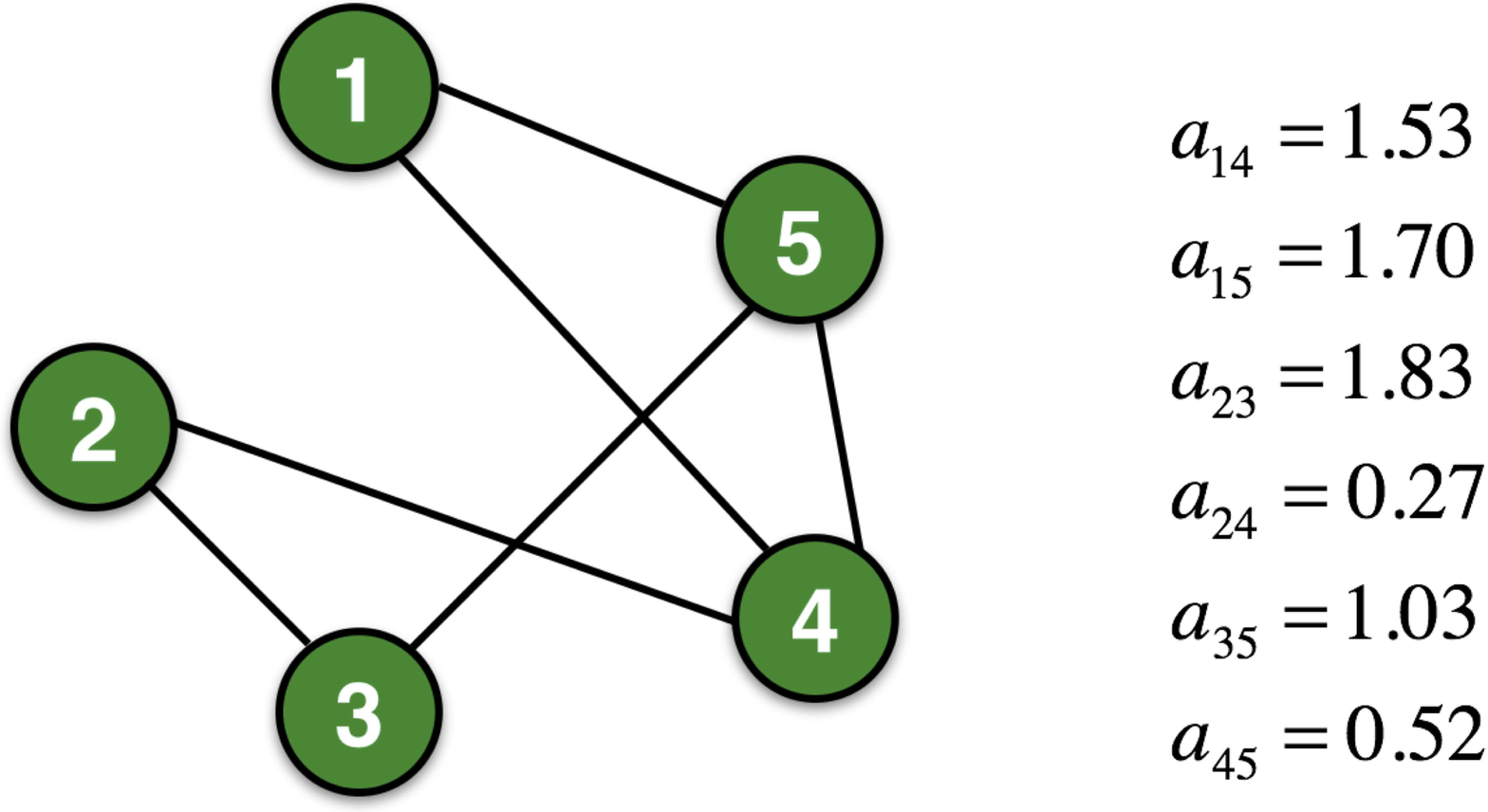}
 \caption{Underlying topology - simple connected weighted graph}
 \label{fig:topology}
\end{figure}

\begin{table}[ht]
\caption{Numerical simulations - parameters and initial values for a network of $N=5$ HKB oscillators}
\centering
\label{table:5nodesTable}
\begin{tabular}{llllll}
\hline\noalign{\smallskip}
Nodes & $\alpha_i$ & $\beta_i$  & $\gamma_i$ & $\omega_i$ & $x_i(0)$ \\
\noalign{\smallskip}\hline\noalign{\smallskip}
Node 1 & $0.46$ & $1.16$  & $0.58$ & $0.16$ & $[-1.4, +0.3]$  \\
Node 2 & $0.37$ & $1.20$  & $0.58$ & $0.26$ & $[+1.0, +0.2]$ \\
Node 3 & $0.34$ & $1.73$  & $0.58$ & $0.18$ & $[-1.8, -0.3]$  \\
Node 4 & $0.17$ & $0.31$  & $0.58$ & $0.21$ & $[+0.2, -0.2]$ \\
Node 5 & $0.76$ & $0.76$  & $0.58$ & $0.27$ & $[+1.5, +0.1]$\\
\noalign{\smallskip}\hline
\end{tabular}
\end{table} 

This scenario leads to $p_M=2.6$, $v_M=0.96$, $\bar{M}=7.6$, $\lambda_2 \left(L_{\mathcal{N}} \right)=0.4112$, $P(1,1)=0.077$, $P(2,2)=0.077$, $W(1,1)=0.001$, $W(2,2)=0.045$ and $\bar{c} = 1.4211$. Fig. \ref{fig:FSC} shows $x_1(t)$ for all the nodes in the network (blue line: node 1, green line: node 2, red line: node 3, cyan line: node 4, magenta line: node 5) when connected through a full state coupling protocol with $c=1.45$: as we can see, our analytical results are confirmed and synchronization is achieved by the network.

On the other hand, in Fig. \ref{fig:eta} we show that bounded synchronization can actually be achieved for smaller values of the coupling strength when considering a full state coupling ($c=0.07$), and that it can be achieved also with the two other coupling protocols presented earlier in this paper ($c_1=c_2=0.1$ for the partial state coupling and $a=b=-1, c=0.1$ for the HKB coupling, respectively). Indeed, with this choice of coupling strength, the error norm $\eta(t)$ is roughly bounded by $\epsilon \simeq 2$. In Fig. \ref{fig:gsCompNum} we then show the trend of the group synchronization obtained respectively in all the three cases: as we can see, after an initial transient $\rho_g(t)$ reaches a much higher value, confirming what observed in \cite{RGFGM12}. In particular, the trend obtained when considering an HKB coupling resembles the most the one obtained in the real experiments involving human people: indeed, in this case the group synchronization presents persistent oscillations with the highest amplitude, as observed in the rocking chairs experiments. 

\begin{figure}
 \centering
 \subfloat[first state variable]{\includegraphics[width=.5\textwidth]{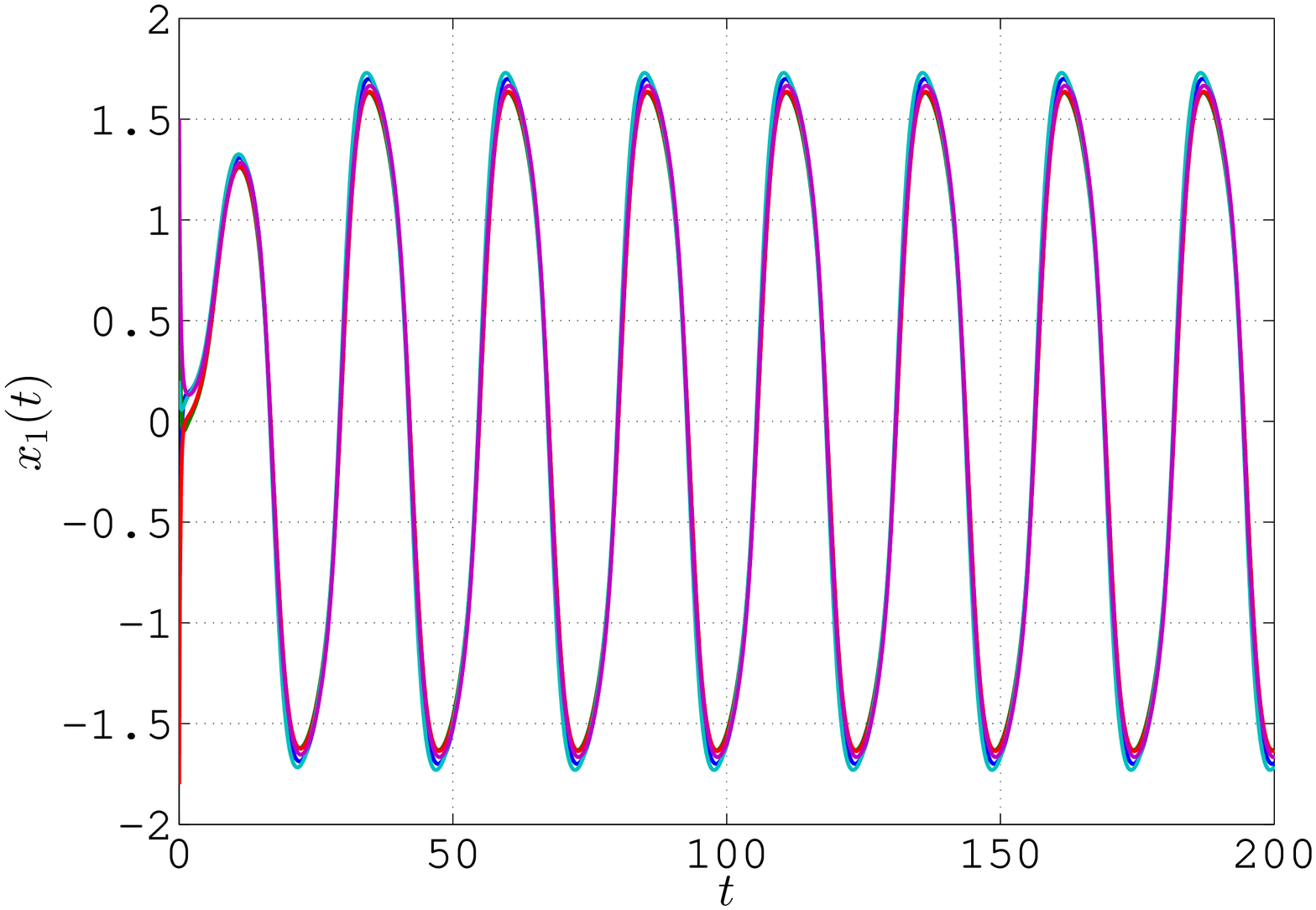}}
 \subfloat[first state variable - zoom]{\includegraphics[width=.5\textwidth]{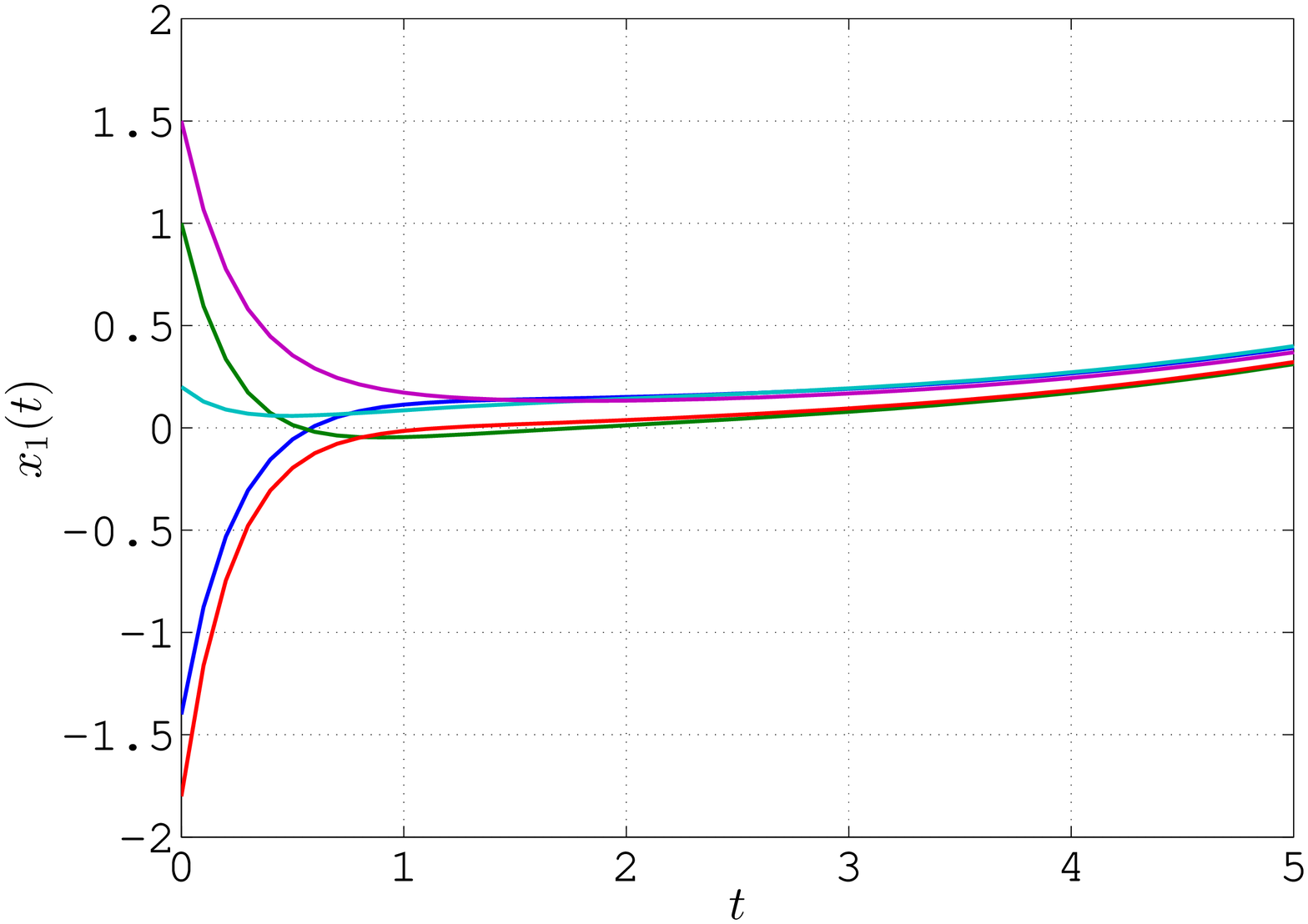}}
 \caption{First state variable $x_{i_1}$ in a simple connected weighted heterogeneous network of $N=5$ HKB oscillators - full state coupling ($c=1.45$)}
 \label{fig:FSC}
\end{figure}

\begin{figure}
 \centering
 \includegraphics[width=.7\textwidth]{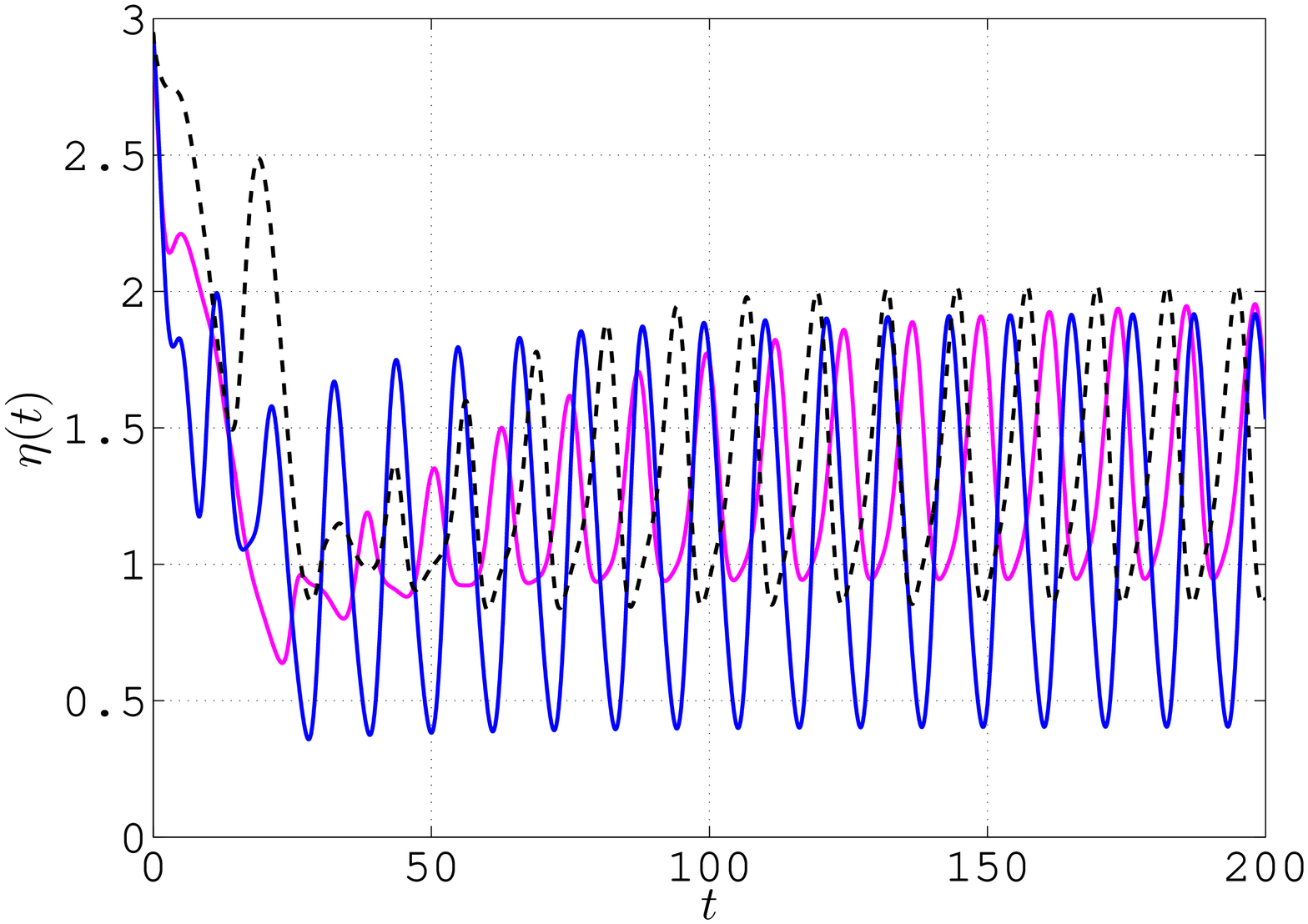}
 \caption{Error norm in a simple connected weighted heterogeneous network of $N=5$ HKB oscillators - magenta solid line: full state coupling ($c=0.07$), blue solid line: partial state coupling ($c_1=c_2=0.1$), black dashed line: HKB coupling ($a=b=-1, c=0.1$)}
 \label{fig:eta}
\end{figure}

\begin{figure}
 \centering
 \subfloat[group synchronization]{\includegraphics[width=.5\textwidth]{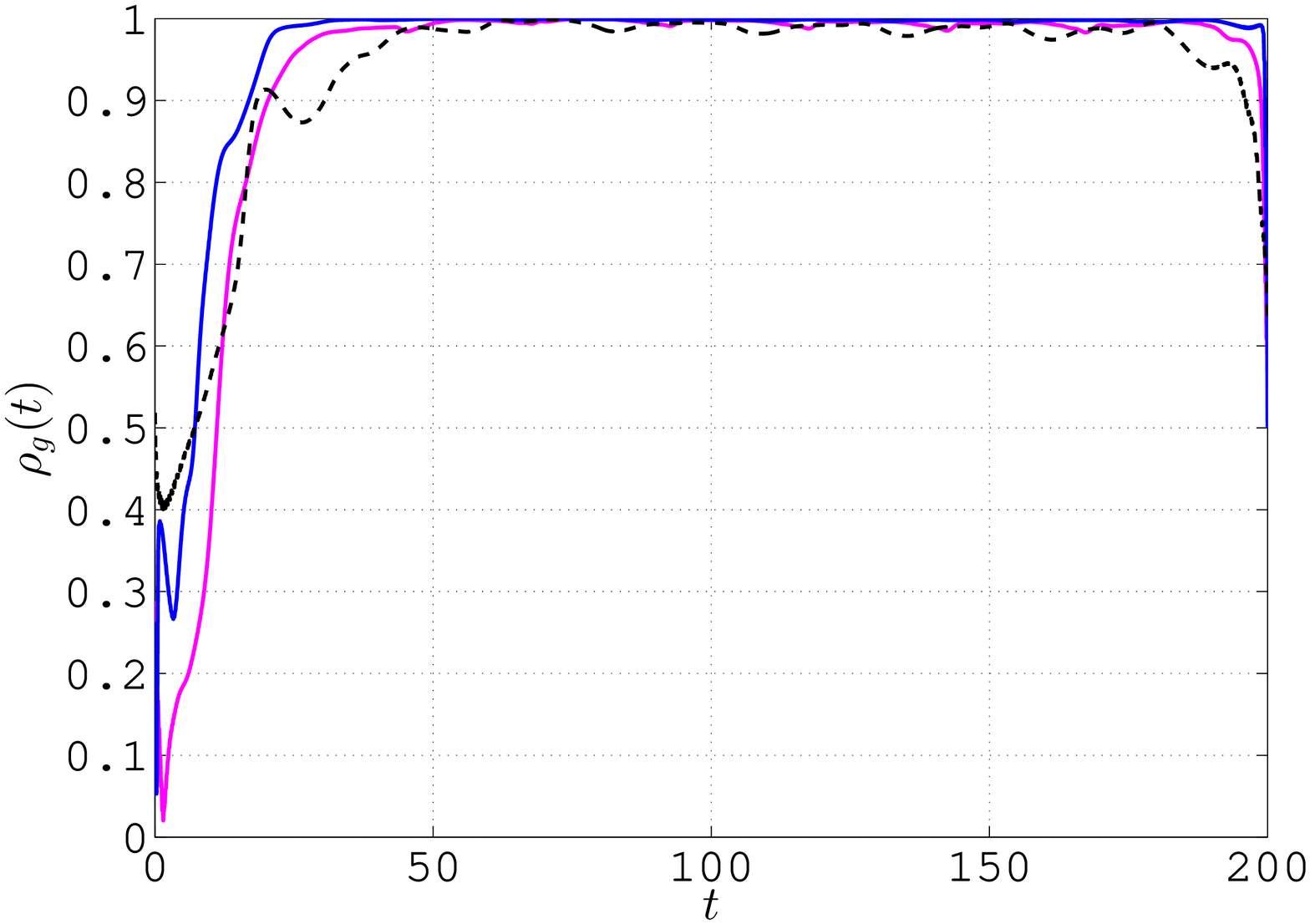}}
 \subfloat[group synchronization - zoom]{\includegraphics[width=.5\textwidth]{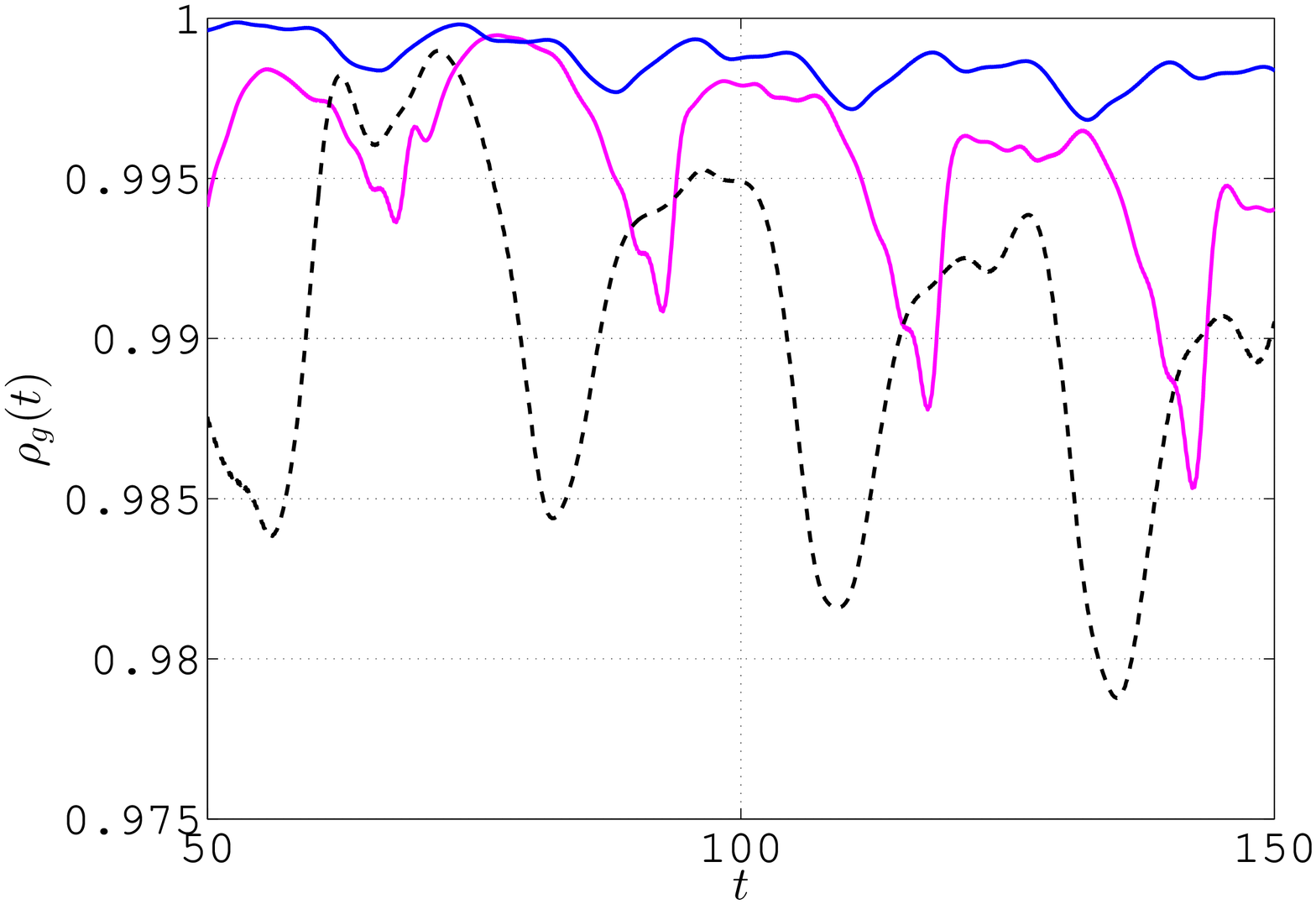}}
 \caption{Group synchronization in a simple connected weighted heterogeneous network of $N=5$ HKB oscillators - magenta solid line: full state coupling ($c=0.07$), blue solid line: partial state coupling ($c_1=c_2=0.1$), black dashed line: HKB coupling ($a=b=-1, c=0.1$)}
 \label{fig:gsCompNum}
\end{figure}

\section{Conclusion}
\label{sec:conclusion}
We have proposed a mathematical model for movement synchronization of a group of three or more people. In particular we have considered heterogeneous networks of HKB nonlinear oscillators, in which each equation is characterized by a different set of parameters to account for human-to-human variability. Three different coupling models, both linear and nonlinear, have been investigated, and the effects of adding an external entrainment signal have been analyzed.
We have found analytical conditions for a connected simple undirected network to achieve bounded synchronization when considering a full state coupling as interaction protocol among the nodes, while we have numerically shown that bounded synchronization can be achieved also when considering a partial state coupling or a HKB coupling.
In particular, we have observed that it is possible to replicate some of the synchronization features obtained in the rocking chairs experiments with all the three coupling protocols proposed in this paper, although the most realistic one is achieved when connecting the nodes through a nonlinear HKB coupling. Indeed, in this case the group synchronization presents persistent oscillations with the highest amplitude, as observed in \cite{RGFGM12}.

Some viable extensions of this work include performing real experiments involving hand synchronization of more than two players and then choosing the coupling protocol that can best capture the observations coming from such experiments and explain the onset and features of movement coordination among the human players.  Finally, we wish to prove global bounded synchronization with any kind of coupling protocol, both linear and nonlinear, also in the more general case of directed networks.

\section*{Acknowledgements}
The authors wish to acknowledge support from the European Project AlterEgo FP7 ICT 2.9 - Cognitive Sciences and Robotics, Grant Number 600610.


\begin{thebibliography}{1}

\bibitem{AST95}
Amazeen PG, Schmidt RC, Turvey MT (1995) Frequency detuning of the phase entrainment dynamics of visually coupled rhythmic movements. Biological Cybernetics 72(6): 511-518

\bibitem{CBVB14}
Codrons E, Bernardi NF, Vandoni M, Bernardi L (2014) Spontaneous group synchronization of movements and respiratory rhythms. PLoS one 9(9), e107538

\bibitem{DLdBL14}
De Lellis P, di Bernardo M, Liuzza D (2014) Convergence and synchronization in heterogeneous networks of smooth and piecewise smooth systems. arXiv preprint arXiv:1404.1835

\bibitem{DdGTK14}
Dumas G, de Guzman GC, Tognoli E, Kelso JS (2014) The human dynamic clamp as a paradigm for social interaction. Proceedings of the National Academy of Sciences 111(35): 3726-3734

\bibitem{F82}
Folkes VS (1982) Forming relationships and the matching hypothesis. Personality and Social Psychology Bulletin 8(4): 631-636

\bibitem{FR10}
Frank TD, Richardson MJ (2010) On a test statistic for the Kuramoto order parameter of synchronization: an illustration for group synchronization during rocking chairs. Physica D: Nonlinear Phenomena 239(23): 2084-2092

\bibitem{FJ08}
Fuchs A, Jirsa VK (2008) J. A. Scott Kelso's contributions to our understanding of coordination. Coordination: Neural, Behavioral and Social Dynamics, Springer Berlin Heidelberg, pp 327-346

\bibitem{FJHK96}
Fuchs A, Jirsa VK, Haken H, Kelso JS (1996) Extending the HKB model of coordinated movement to oscillators with different eigenfrequencies. Biological cybernetics 74(1): 21-30

\bibitem{GDSC96}
Giese MA, Dijkstra TMH, Sch\"{o}ner G, and Gielen CCAM (1996) Identification of the nonlinear state-space dynamics of the action-perception cycle for visually induced postural sway. Biological cybernetics 74(5): 427-437

\bibitem{HKB85}
Haken H, Kelso JS, Bunz H (1985) A theoretical model of phase transitions in human hand movements. Biological cybernetics 51(5): 347-356

\bibitem{HT09}
Himberg T, Thompson M (2009) Group synchronization of coordinated movements in a cross-cultural choir workshop. Proceedings of the 7th Triennial Conference of European Society for the Cognitive Sciences of Music (ESCOM 2009), Jyv{\"a}skyl{\"a}, Finland

\bibitem{IR15}
Iqbal T, Riek LD (2015) A method for automatic detection of psychomotor entrainment. IEEE Transactions on affective computing 6(3)

\bibitem{JFK98}
Jirsa VK, Fuchs A, Kelso JS (1998) Connecting cortical and behavioral dynamics: bimanual coordination. Neural Computation 10(8): 2019-2045

\bibitem{JS04}
Jouffroy J, Slotine JJ (2004) Methodological remarks on contraction theory. Proceedings of the 43rd IEEE Conference on Decision and Control (CDC), The Bahamas, 3, pp 2537-2543

\bibitem{KKSS87}
Kay BA, Kelso JS, Saltzman EL, Sch\"{o}ner G (1987) Space-time behavior of single and bimanual rhythmical movements: data and limit cycle model. Journal of Experimental Psychology: Human Perception and Performance 13(2): 178

\bibitem{KdGRT09}
Kelso JS, de Guzman GC, Reveley C, Tognoli E (2009) Virtual partner interaction (VPI): exploring novel behaviors via coordination dynamics. PLoS one 4(6), e5749

\bibitem{KSSH87}
Kelso JS, Sch\"{o}ner G, Scholz JP, Haken H (1987) Phase-locked modes, phase transitions and component oscillators in biological motion. Physica Scripta 35(1): 79

\bibitem{KCRPM08}
Kralemann B, Cimponeriu L, Rosenblum M, Pikovsky A, Mrowka R (2008) Phase dynamics of coupled oscillators reconstructed from data. Physical Review E 77(6), 066205

\bibitem{LS11}
Lakens D, Stel M (2011) If they move in sync, they must feel in sync: movement synchrony leads to attributions of rapport and entitativity. Social Cognition 29(1): 1-14

\bibitem{LDCH10}
Li Z, Duan Z, Chen G, Huang L (2010) Consensus of multiagent systems and synchronization of complex networks: a unified viewpoint. IEEE Transactions on Circuits and Systems I: Regular Papers 57(1): 213-224

\bibitem{LMH13}
Lorenz T, M\"ortl T, Hirche S (2013) Movement synchronization fails during non-adaptive human-robot interaction. Proceedings of the 8th ACM/IEEE international conference on human-robot interaction, pp 189-190

\bibitem{MLVGSH12}
M\"ortl A, Lorenz T, Vlaskamp BN, Gusrialdi A, Schub\"o A, Hirche S (2012) Modeling inter-human movement coordination: synchronization governs joint task dynamics. Biological cybernetics 106(4-5): 241-259

\bibitem{NDA11}
Noy L, Dekel E, Alon U (2011) The mirror game as a paradigm for studying the dynamics of two people improvising motion together. Proceedings of the National Academy of Sciences 108(52): 20947-20952

\bibitem{RP13}
Repp BH, Su YH (2013) Sensorimotor synchronization: a review of recent research (2006-2012). Psychonomic bulletin \& review 20(3): 403-452 

\bibitem{RGFGM12}
Richardson MJ, Garcia RL, Frank TD, Gergor M, Marsh KL (2012) Measuring group synchrony: a cluster-phase method for analyzing multivariate movement time-series. Frontiers in physiology 3

\bibitem{RMIGS07}
Richardson MJ, Marsh KL, Isenhower RW, Goodman JR, Schmidt RC (2007) Rocking together: Dynamics of intentional and unintentional interpersonal coordination. Human Movement Science 26(6): 867-891

\bibitem{RdB09}
Russo G, di Bernardo M (2009) How to synchronize biological clocks. Journal of Computational Biology 16(2): 379-393

\bibitem{RdBS10}
Russo G, di Bernardo M, Sontag ED (2010) Global entrainment of transcriptional systems to periodic inputs. PLoS Computational Biology 6(4), e1000739

\bibitem{RdBS13}
Russo G, di Bernardo M, Sontag ED (2013) A contraction approach to the hierarchical analysis and design of networked systems. IEEE Transactions on Automatic Control 58(5): 1328-1331

\bibitem{SRAG07}
Schmidt RC, Richardson MJ, Arsenault C, Galantucci B (2007) Visual tracking and entrainment to an environmental rhythm. Journal of Experimental Psychology: Human Perception and Performance 33(4), 860

\bibitem{ST94}
Schmidt RC, Turvey MT (1994) Phase-entrainment dynamics of visually coupled rhythmic movements. Biological cybernetics 70(4): 369-376

\bibitem{SRdBTA14}
S\l{}owi\'{n}ski P, Rooke E, di Bernardo M, Tanaseva-Atanasova K (2014) Kinematic characteristics of motion in the mirror game. Proceedings of the 2014 IEEE International Conference on Systems, Man and Cybernetics (SMC), San Diego, California, USA, pp 748-753

\bibitem{VOD10}
Valdesolo P, Ouyang J, DeSteno D (2010) The rhythm of joint action: Synchrony promotes cooperative ability. Journal of experimental social psychology 46(4): 693-695

\bibitem{VMLB11}
Varlet M, Marin L, Lagarde J, Bardy BG (2011) Social postural coordination. Journal of Experimental Psychology: Human Perception and Performance 37(2): 473

\bibitem{VSR15}
Varlet M, Schmidt RC, Richardson MJ (2015) Influence of internal and external noise on spontaneous visuomotor synchronization. Journal of Motor Behavior, pp 1-10

\bibitem{WH09}
Wiltermuth SS, Heath C (2009) Synchrony and cooperation. Psychological Science 20(1): 1-5

\bibitem{WW95}
Wing AM, Woodburn C (1995) The coordination and consistency of rowers in a racing eight. Journal of sports sciences 13(3): 187-197

\bibitem{YY11}
Yokoyama K, Yamamoto Y (2011) Three people can synchronize as coupled oscillators during sports activities. PLoS Computational Biology 7, e1002181.

\bibitem{ZATdB14_1}
Zhai C, Alderisio F, Tsaneva-Atanasova K, di Bernardo M (2014a) A novel cognitive architecture for a human-like virtual player in the mirror game. Proceedings of the 2014 IEEE International Conference on Systems, Man and Cybernetics (SMC), San Diego, California, USA, pp 754-759

\bibitem{ZATdB14_2}
Zhai C, Alderisio F, Tsaneva-Atanasova K, di Bernardo M (2014b) Adaptive tracking control of a virtual player in the mirror game. Proceedings of the 53rd IEEE Conference on Decision and Control (CDC), Los Angeles, California, USA, pp 7005-7010

\bibitem{ZATdB15}
Zhai C, Alderisio F, Tsaneva-Atanasova K, di Bernardo M (2015) A model predictive approach to control the motion of a virtual player in the mirror game. Accepted to the 54th IEEE Conference on Decision and Control (CDC), Osaka, Japan

\end{thebibliography}
\end{document}